\newtheorem{theorem}{Theorem}[section]
\newtheorem{corollary}[theorem]{Corollary}
\newtheorem{proposition}[theorem]{Proposition}
\newtheorem{lemma}[theorem]{Lemma}
\theoremstyle{definition}
\newtheorem{remark}[theorem]{Remark}
\newtheorem{example}[theorem]{Example}
\newcommand{\A}{\mathcal{A}}
\newcommand{\D}{\mathcal{D}}
\newcommand{\K}{\mathcal{K}}
\newcommand{\Pc}{\mathcal{P}}
\newcommand{\T}{\mathcal{T}}
\newcommand{\V}{\mathcal{V}}
\newcommand{\BB}{\mathfrak{B}}
\newcommand{\Dk}{\mathfrak{D}}
\newcommand{\F}{\mathbb{F}}
\newcommand{\G}{\mathbb{G}}
\newcommand{\U}{\mathscr{U}}
\newcommand{\PG}{\mathrm{PG}}
\newcommand{\db}{\displaybreak[3]}
\begin{document}

\title
{Further results on covering codes with radius $R$ and codimension $tR+1$
\date{}
\thanks{The research of S. Marcugini and F. Pambianco was supported in part by the Italian
National Group for Algebraic and Geometric Structures and their Applications (GNSAGA -
INDAM) (Contract No. U-UFMBAZ-2019-000160, 11.02.2019) and by University of Perugia
(Project No. 98751: Strutture Geometriche, Combinatoria e loro Applicazioni, Base Research
Fund 2017--2019; Fighting Cybercrime with OSINT, Research Fund 2021).}
}
\maketitle
\begin{center}
{\sc Alexander A. Davydov}\\
 {\sc\small Kharkevich Institute for Information Transmission Problems}\\
 {\sc\small Russian Academy of Sciences,
Moscow, 127051, Russian Federation}\\
 \emph{E-mail address:} alexander.davydov121@gmail.com\medskip\\
 {\sc Stefano Marcugini and
 Fernanda Pambianco }\\
 {\sc\small Department of  Mathematics  and Computer Science,  Perugia University,}\\
 {\sc\small Perugia, 06123, Italy}\\
 \emph{E-mail address:} \{stefano.marcugini, fernanda.pambianco\}@unipg.it
\end{center}

\textbf{Abstract.}
The length function $\ell_q(r,R)$ is the smallest
possible length $n$ of a $ q $-ary linear $[n,n-r]_qR$ code with codimension (redundancy) $r$ and covering radius $R$. Let $s_q(N,\rho)$ be the smallest size of a $\rho$-saturating set in the projective space $\mathrm{PG}(N,q)$. There is a one-to-one correspondence between $[n,n-r]_qR$ codes and $(R-1)$-saturating $n$-sets in $\mathrm{PG}(r-1,q)$ that implies $\ell_q(r,R)=s_q(r-1,R-1)$. In this work, for $R\ge3$,  new asymptotic upper bounds on $\ell_q(tR+1,R)$ are obtained in the following form:
\begin{align*}
&\bullet~\ell_q(tR+1,R) =s_q(tR,R-1)\\
    &\hspace{0.4cm} \le\sqrt[R]{\frac{R!}{R^{R-2}}}\cdot q^{(r-R)/R}\cdot\sqrt[R]{\ln q}+o(q^{(r-R)/R}), \hspace{0.3cm} r=tR+1,~t\ge1,\\
      &\hspace{0.4cm}~ q\text{ is an arbitrary prime power},~q\text{ is large enough};\\
       &\bullet~\text{ if additionally }R\text{ is large enough, then }\sqrt[R]{\frac{R!}{R^{R-2}}}\thicksim\frac{1}{e}\thickapprox0.3679.
    \end{align*}
The new bounds are essentially better than the known ones.

For $t=1$, a new construction of $(R-1)$-saturating sets in the projective space $\mathrm{PG}(R,q)$, providing sets of small sizes, is proposed. The $[n,n-(R+1)]_qR$  codes, obtained by the construction,  have minimum distance $R + 1$, i.e. they are almost MDS (AMDS) codes. These codes  are taken as the starting ones
in the lift-constructions (so-called ``$q^m$-concatenating constructions'') for covering codes to obtain infinite families of codes with growing codimension $r=tR+1$, $t\ge1$. 

\textbf{Keywords:} Covering codes, The length function, Saturating sets in projective spaces

\textbf{Mathematics Subject Classification (2010).} 94B05, 51E21, 51E22

\section{Introduction}\label{sec:Intr}
Let $\F_{q}^{\,n}$ be the $n$-dimensional vector space over the Galois field $\F_{q}$ with $q$ elements.
The sphere of radius $R$ with center $c$ in $\F_{q}^{\,n}$
is the set $\{v:v\in \F_{q}^{\,n},$ $d(v,c)\leq R\}$ where $d(v,c)$ is the Hamming distance between the vectors $v$ and $c$.
A linear code in $\F_{q}^{\,n}$
 with covering radius $R$, codimension (redundancy) $r$, and minimum distance $d$ is an $[n, n- r,d]_qR$ code. If $d$ is not relevant it can be omitted.
The value $R$  is the smallest integer such that the space $\F_{q}^{\,n}$ is covered by
the spheres of radius $R$ centered at the codewords.
Every vector in $\F_{q}^{\,r}$ is equal to a linear combination of at most $R$ columns of a parity check matrix of the code.
For an introduction to coding theory, see \cite{Bier,HufPless,MWS}. An $[n, n- r,d]_qR$ code with $d=r$ is an almost MDS (AMDS) code, see e.g.
\cite{AlderBruen-AMDS,Boer-AMDS,DodunLang-NMDS,AMDS-MenPelSal} and the references therein.

The minimum possible length $n$ such that an $[n, n-r ]_q R$ code exists is called \emph{the length function} and is denoted by $\ell_q(r,R)$. If $R$ and
$r$ are fixed, then the covering problem for codes is finding codes of small length. Codes investigated from the point
of view of the covering problem are called \emph{covering codes}. Studying covering codes is a classical combinatorial problem. Covering codes are connected with many theoretical and applied areas, see e.g. \cite[Section 1.2]{CHLS-bookCovCod}, \cite[Introduction]{DMP-AMC2021}, \cite{Graismer-2023}, and the references therein. For an introduction to  covering
codes, see \cite{Handbook-coverings,CHLS-bookCovCod,DGMP-AMC,Struik,LobstBibl,GrahSlo}.

This paper is devoted to the asymptotic upper bound on the length function $\ell_q(tR+1,R)$, $t\ge1$, when $q$ is a large enough arbitrary prime power.

Let $\PG(N,q)$ be the $N$-dimensional projective space over the Galois field~$\mathbb{F}_q$.
 We will say ``$N$-space'' (or ``$M$-subspace'') when the value of $q$ is clear by the context;
$M$ points of $\PG(N,q)$ are said to be \emph{in general position} if they generate an $(M-1)$-subspace.  A point of $\PG(N,q)$ in homogeneous coordinates can be considered as a vector of $\F_q^{N+1}$. Points in general position correspond to linear independent vectors.

Effective methods to obtain upper bounds on the length function $\ell_q(r,R)$ are connected with \emph{saturating sets in $\PG(N,q)$}.
 A point set $S\subseteq\PG(N,q)$ is
$\rho$-\emph{saturating} if any point $A\in\PG(N,q)$ lies in a $\rho$-subspace of $\PG(N,q)$ generated by $\rho+1$ points of $S$
  and $\rho$ is the smallest value with this property.
 Every
point $A\in\PG(N,q)$ can be written as a linear combination of at most $\rho+1$ points of $S$.
 In the literature, saturating sets are also called ``saturated
sets'', ``spanning sets'', and ``dense sets''.

Let $s_q(N,\rho)$ be the smallest size of a $\rho$-saturating set in $\mathrm{PG}(N,q)$.
If the positions of a column of a parity check matrix of an $[n,n-r]_qR$ code are treated as homogeneous coordinates of a point in $\PG(r-1,q)$, then this matrix is an $(R-1)$-saturating $n$-set in $\PG(r-1,q)$, and vice versa. So, there is a \emph{one-to-one correspondence} between $[n,n-r]_qR$ codes and $(R-1)$-saturating $n$-sets in $\PG(r-1,q)$. This implies
\begin{equation}\label{eq1:s=ell}
  \ell_q(r,R)=s_q(r-1,R-1).
\end{equation}
    For an introduction to geometries over finite fields and their connections with coding theory, see \cite{Bier,EtzStorm2016,Giul2013Survey,Hirs,HirsStor-2001,LandSt,DGMP-AMC} and the references therein.

Throughout the paper, $c$ is a constant  independent of $q$ but it is possible that $c$ is dependent on $r$ and $R$. In the latter case, $R$ can be used as a subscript of $c$. Also, the superscripts ``new" and ``knw" (i.e. ``known") are possible.

In \cite{BDGMP-R2R3CC_2019,DMP-R=3Redun2019}, \cite[Proposition 4.2.1]{denaux}, see also the references therein, lower bounds of the following form are considered:
\begin{equation}\label{eq1:lowbound}
\ell_q(r,R)\ge cq^{(r-R)/R},~  R\text{ and }r\text{ fixed}.
\end{equation}
In \cite{DGMP-AMC}, the bound \eqref{eq1:lowbound} is given in another \mbox{(asymptotic)} form.

In the literature, the bound \eqref{eq1:lowbound} is achieved for special values of $r,R,q$:
 \begin{align*}
 &r\ne tR,~  q=(q')^R~ \text{\cite{DGMP-AMC,denaux,denaux2,HegNagy}};~~R=sR',~ r=tR+s, ~q=(q')^{R'}~\text{\cite{DGMP-AMC,DMP-R=tR2019}};\db\\
 &r=tR,~q\text{ is an arbitrary prime power}~ \text{\cite{DavOst-IEEE2001,DGMP-AMC,DMP-R=tR2019,DavOst-DESI2010}};
  \end{align*}
  where $t,s$ are integers and $q'$ is a prime power.

In the general case, for arbitrary $r,R, q$, in particular when $r\ne tR$ and $q$ is an arbitrary prime power, the problem
of achieving the bound \eqref{eq1:lowbound} is open.

For $r=tR+1$, $R\ge2$, $t\ge1$, in \cite{BDGMP-R2R3CC_2019,DMP-R=3Redun2019,DMP-rad3arXiv2023,Nagy,DMP-AMC2021}, see also the references therein,
  upper bounds of the following forms are obtained:
\begin{align}\label{eq1:upbound}
  &  \ell_q(tR+1,R)\le cq^{(r-R)/R}\cdot\sqrt[R]{\ln q},~ q\text{ is an arbitrary prime power}, ~q>q_0,\db\\
  & \hspace{1cm}q_0\text{ is a fixed value that depends on the approach used to
obtain the bound};\notag\db\\
&\label{eq1:asympupbound}
\ell_q(tR+1,R)\le c^{knw}_R q^{(r-R)/R}\cdot\sqrt[R]{\ln q}+o(q^{(r-R)/R}),~ q\text{ is an arbitrary prime power},\db\\
&\hspace{1cm}q\text{ is large enough}.\notag
  \end{align}
In the bounds \eqref{eq1:upbound}, \eqref{eq1:asympupbound}, the ``price'' of the non-restricted structure of $q$ is the relatively small factor $\sqrt[R]{\ln q}$.
  The bound \eqref{eq1:asympupbound} is an \emph{asymptotic upper bound}.

For $R\ge3$,   the smallest known constants $c^{knw}_R$  are obtained in \cite{DMP-AMC2021,DMP-rad3arXiv2023} where we have
\begin{equation}\label{eq1:c knw}
  c^{knw}_R=\left\{\begin{array}{@{}cl@{}}
                     \sqrt[3]{18}\thickapprox2.6207& \text{if }  R=3,~r=3t+1 \text{ \cite{DMP-rad3arXiv2023}}\\
                    \frac{R}{R-1}\sqrt[R]{R(R-1)\cdot R!} & \text{if } R\ge3,~r=R+1 \text{ \cite[equations (3.1), (6.18)]{DMP-AMC2021}} \\
                    3.43R  & \text{if } R\ge3,~ r=tR+1, ~t\ge2 \text{ \cite[equation (3.13)]{DMP-AMC2021}}
                   \end{array}
  \right..
\end{equation}

In this paper, for $R\ge3$, we obtain new asymptotic upper bounds in the form \eqref{eq1:asympupbound}. We essentially decrease the known constants  before $q^{(r-R)/R}\cdot\sqrt[R]{\ln q}$. We denote
\begin{equation}\label{eq1:c new}
  c^{new}_{R}\triangleq\sqrt[R]{\frac{R!}{R^{R-2}}}.
\end{equation}

Lemma \ref{lem_lim_cnew} states that
\begin{equation}
 \lim_{R\rightarrow\infty} c^{new}_{R}=\lim_{R\rightarrow\infty}\sqrt[R]{\frac{R!}{R^{R-2}}}=\frac{1}{e}\thickapprox0.3679.
\end{equation}

Our main result is as follows, see Sections \ref{sec:est_size}, \ref{sec:qmconc}:
\begin{theorem}
Let $R\ge3$ be fixed. For the smallest
length of a $ q $-ary linear code with codimension (redundancy) $r=tR+1$ and covering radius $R$ (i.e. for the length function $\ell_q(tR+1,R)$) and for the smallest size $s_q(tR,R-1)$ of an $(R-1)$-saturating set in the projective space $\PG(tR,q)$
the following asymptotic upper bounds hold:
  \begin{align}\label{eq1:newbound}
   &\bullet~\ell_q(tR+1,R) =s_q(tR,R-1)\le
      c^{new}_{R}\cdot q^{(r-R)/R}\cdot\sqrt[R]{\ln q}+o(q^{(r-R)/R}),\db\\
    &\hspace{1cm} r=tR+1,~t\ge1,~q\text{ is an arbitrary prime power},~q \text{ is large enough}; \notag\\
    &\bullet~\text{ if additionally }R\text{ is large enough, then  in \eqref{eq1:newbound} we have }c^{new}_{R}\thicksim\frac{1}{e}\thickapprox0.3679. \label{eq1:newboundRbig}
  \end{align}
  The bounds are provided by infinite families of $[n,n-r]_qR$ codes of the corresponding lengths. For $t=1$ the codes have minimum distance $d=R+1$ and they are  AMDS codes.
\end{theorem}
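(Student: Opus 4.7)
The plan is to follow the two-stage strategy outlined in the introduction: produce a good starting code for $t=1$ via a direct construction in $\PG(R,q)$, then lift to arbitrary $t$ by the $q^m$-concatenating construction, and finally analyze the asymptotic behavior of the constant $c^{new}_{R}$ as $R\to\infty$.

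For Stage~1 (the $t=1$ construction of Section~\ref{sec:est_size}), I would build $S_0\subset\PG(R,q)$ as a carefully selected subset of a normal rational curve (or some other $(R+1)$-arc). The arc property automatically guarantees that every $R$ points of $S_0$ span a hyperplane, which is exactly the condition that the associated $[|S_0|,|S_0|-(R+1)]_qR$ code has minimum distance $d=R+1$, i.e.\ is AMDS. Saturation --- every point $A\in\PG(R,q)$ must lie on a hyperplane spanned by some $R$-tuple of $S_0$ --- would be enforced by a first-moment / probabilistic argument: an $n$-subset of the arc produces $\binom{n}{R}\sim n^R/R!$ hyperplanes, each containing $\sim q^{R-1}$ points of $\PG(R,q)$, so for the expected number of uncovered points to vanish one needs (after a union bound with logarithmic slack)
$$
\frac{n^R}{R!}\cdot\frac{q^{R-1}}{q^R}\gtrsim \ln q,
$$
which already rearranges to $n\gtrsim (R!)^{1/R}q^{1/R}(\ln q)^{1/R}$. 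The sharper constant $c^{new}_{R}=\sqrt[R]{R!/R^{R-2}}$ of~\eqref{eq1:c new} would come from a refined incidence count that exploits the algebraic structure of the arc and removes the overcounting of hyperplanes that are spanned by several $R$-tuples inside $S_0$; this refinement is where the $R^{R-2}$ in the denominator enters.

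For Stage~2 (Section~\ref{sec:qmconc}), I would feed the AMDS starting code from Stage~1 into a $q^m$-concatenating construction in the sense of~\cite{DMP-AMC2021}. Each iteration increases the codimension by $R$ and the length by roughly a factor of $q$, so after $t-1$ iterations one obtains an infinite family of $[n_t,n_t-(tR+1)]_qR$ codes with
$$
n_t \;\le\; c^{new}_{R}\cdot q^{(tR+1-R)/R}\sqrt[R]{\ln q}+o\bigl(q^{(tR+1-R)/R}\bigr) \;=\; c^{new}_{R}\cdot q^{(r-R)/R}\sqrt[R]{\ln q}+o\bigl(q^{(r-R)/R}\bigr),
$$
which is precisely~\eqref{eq1:newbound} for $r=tR+1$. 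Once the Stage~1 seed and the concatenation are in hand, this step is essentially bookkeeping on the length growth.

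To establish~\eqref{eq1:newboundRbig} one invokes Lemma~\ref{lem_lim_cnew}, which follows from Stirling's formula $R!\sim\sqrt{2\pi R}(R/e)^R$:
$$
c^{new}_{R}=\left(\frac{R!}{R^{R-2}}\right)^{1/R}\sim\frac{\bigl(\sqrt{2\pi R}\cdot R^2\bigr)^{1/R}}{e}\longrightarrow\frac{1}{e} \quad\text{as }R\to\infty,
$$
because any polynomial-in-$R$ factor raised to the $1/R$ tends to $1$. I expect the main obstacle to lie in Stage~1: obtaining the precise constant $\sqrt[R]{R!/R^{R-2}}$ --- rather than the weaker $\sqrt[R]{R!}$ that the naïve union bound gives --- while simultaneously preserving the general position required for the AMDS distance $d=R+1$.
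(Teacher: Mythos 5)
Your proposal takes a genuinely different route from the paper. The paper never touches a rational normal curve or any algebraic object: Construction~B is a purely combinatorial greedy algorithm. At each step $w$ it picks a hyperplane $\Pi_w$ \emph{skew} to the current set $\K_{w-1}$, deletes from $\Pi_w$ every point that would destroy $R$-wise general position (the sets $\mathfrak{T}_{w,i}$), chooses a ``leading point'' $A_{wR+1}$ that covers at least the \emph{average} number of uncovered points outside $\Pi_w$ (this is the inequality \eqref{eq33:delta Alead}), and then adds $R-1$ further points so that $\Pc_{w,R}$ automatically covers all of $\Pi_w$ (Corollary~\ref{cor32:b}). The minimum distance $d=R+1$ is a byproduct of the deletion step, not of an arc assumption: the paper maintains only $R$-wise general position, which is precisely the AMDS condition and strictly weaker than the $(R+1)$-wise condition your arc would give. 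Your Stage~2 and your Stirling computation for $c^{new}_R\to 1/e$ do match the paper (Proposition~\ref{prop5_lift_r=Rt+1}, Lemma~\ref{lem_lim_cnew}).

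The gap is exactly where you flag it, and it is not a small one. Your first-moment count $\frac{n^R}{R!}\cdot\frac{q^{R-1}}{q^R}\gtrsim\ln q$ yields the constant $\sqrt[R]{R!}$ (in fact a correct union bound over $\thickapprox q^R$ points forces $\sqrt[R]{R\cdot R!}$, since one needs the failure probability to beat $q^{-R}$, not $q^{-1}$); the stated bound $\sqrt[R]{R!/R^{R-2}}$ is smaller by a factor of roughly $R$ for large $R$, which is the entire content of the improvement over $c^{knw}_R$ in \eqref{eq1:knw/new}. You attribute this factor to ``a refined incidence count that exploits the algebraic structure of the arc,'' but offer no mechanism; in the paper it emerges from an entirely different place, namely the interaction between the step structure and the averaging in Section~\ref{subsec:leading}: adding $R$ points at a time from a skew hyperplane makes $\BB_{w,1}=\binom{wR}{R-1}$ grow like $(wR)^{R-1}/(R-1)!$, the covered fraction per step is $\thickapprox\BB_{w,1}/q$ (Lemma~\ref{lem4} and \eqref{eq41:aver3}), and summing over $w$ steps via Faulhaber/Bernoulli sums (Corollary~\ref{cor42:}, Lemma~\ref{lem42:w suffic}) is what produces the $R^{R-2}$ in the denominator. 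There is also a secondary gap in your Stage~1: a first-moment argument over subsets of a \emph{fixed} algebraic curve requires controlling, for every point $A\in\PG(R,q)$, how many $R$-secant hyperplanes of the curve pass through $A$, and this incidence distribution is not uniform and not established; the paper avoids the issue entirely by having no ambient curve and instead controlling the geometry step by step. To repair your proposal you would either need to supply the missing incidence estimate on the curve or replace Stage~1 with a greedy/averaging scheme along the paper's lines; as written, the proposal proves a bound with constant $\sqrt[R]{R\cdot R!}$, not $\sqrt[R]{R!/R^{R-2}}$.
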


By \eqref{eq1:c knw}--\eqref{eq1:newboundRbig}, for $q$ large enough, we have
\begin{equation}\label{eq1:knw/new}
\frac{c^{knw}_R}{c^{new}_{R}}
=\left\{\begin{array}{@{}cl@{}}
\sqrt[3]{18}/\sqrt[3]{3!/3^{3-2}}\thickapprox2.08&\text{if }R=3,~r=3t+1,~t\ge1 \\
\frac{R^2}{R-1}\sqrt[R]{\frac{R-1}{R}}\thickapprox R+1&\text{if }R\ge4,~r=R+1 \\
3.43R/\sqrt[R]{R!/R^{R-2}}&\text{if }R\ge4,~r=tR+1,~t\ge2\\
3.43eR\thickapprox 9.32 R&\text{if }R\text{ is large enough},~r=tR+1,~t\ge2
                                     \end{array}
\right..
\end{equation}
 By Section \ref{sec:propCnew} and tables in it, $c^{new}_{R}$ \eqref{eq1:c new}   is upper bounded by a decreasing function of~$R$. The ratio $c^{knw}_{R}/c^{new}_R$ \eqref{eq1:knw/new} is an increasing  function of $R$.   When $R$ increases from 4 to 150, then $c^{new}_{R}$ decreases from  $1.1067 \thickapprox0.2767R$ to $0.4024 \thickapprox 0.0027R $ ; $c^{knw}_{R}/c^{new}_R$ for $r=R+1$ increases from 4.9632 to 151; and $c^{knw}_{R}/c^{new}_R$ for $r=tR+1,~t\ge2$ (i.e. $3.43R/c^{new}_{R}$) increases from $12\thickapprox3.1R$ to $1279\thickapprox8.52R$. Moreover, if $r=tR+1,~t\ge2$, and $R$ is large enough, then $c^{knw}_{R}/c^{new}_R\thickapprox 9.32R$.

So, the new bounds are essentially better than the known ones.

We use a geometrical approach to the case $t=1$. We propose Construction B obtaining a relatively small  $(R-1)$-saturating $n$-set in $\PG(R,q)$ by a step-by-step algorithm. The set corresponds to an $[n,n-(R+1),R+1]_qR$ code. Note that, as the minimum distance $d = R+1$, the code is AMDS.
We estimate the code size that gives the upper bounds on $s_q(R,R-1)=\ell_q(R+1,R)$.

For $t\ge2$, we use a lift-construction for covering codes.  It is a variant of the so-called ``$q^m$-concatena\-ting constructions'' proposed in \cite{Dav90PIT} and developed in \cite{DGMP-AMC,DavOst-IEEE2001,DavOst-DESI2010}, see also the references therein and \cite[Section~5.4]{CHLS-bookCovCod}. The $q^m$-concatenating constructions obtain infinite families of covering codes with growing codimension using a starting code with a small one. The covering density of the codes from the infinite families is approximately the same as for the starting code.

We take the obtained $[n,n-(R+1),R+1]_qR$ code as the starting one for the $q^m$-concatena\-ting construction and obtain an infinite family of covering codes with growing codimension $r=tR+1$, $t\ge1$. The family provides the upper bound on $\ell_q(tR+1,R)$.

 The paper is organized as follows.  Section \ref{sec:constrB} describes Construction B that obtains $(R-1)$-saturating $n$-sets in $\PG(R,q)$ corresponding to $[n,n-(R+1),R+1]_qR$ AMDS codes. In Section \ref{sec:est_size}, we give estimates of sizes of saturating sets obtained by  Construction B and the corresponding  upper bounds.  In Section \ref{sec:qmconc}, asymptotic upper bounds  on the length function $\ell_q(tR+1,R)$ are obtained for growing $t\ge1$. The  bounds are provided by infinite families of covering codes with growing codimension $r=tR+1$, $t\ge1$, created by the $q^m$-concatenating construction. In Section \ref{sec:propCnew}, we investigate properties of the new bounds and show that they are essentially better than the known ones.

\section{New Construction B of $(R-1)$-saturating sets in $\PG(R,q)$, $R\ge3$}\label{sec:constrB}

In this section, for any $q$ and $R\ge3$, we propose a new Construction B of $(R-1)$-saturating sets in $\PG(R,q)$. It is an essential (non-obvious and non-trivial) modification of Construction A of~\cite{DMP-AMC2021}. For Construction B, the points of the $(R-1)$-saturating $n$-set in $\PG(R,q)$ (in homogeneous coordinates), treated as columns, form a parity check matrix of an $[n,n-(R+1),R+1]_qR$ code. The minimum distance $d=R+1$ is provided by Construction B. In Section~\ref{sec:qmconc}, this code is used as a starting one for lift-constructions obtaining infinite families of covering codes with growing codimension $r=tR+1$, $t\ge1$.

We construct an $(R-1)$-saturating set in $\PG(R,q)$  by a step-by-step iterative process adding $R$ new points to the current set in
every step.


\subsection{Notations and definitions}\label{subsec:Not&Def}

$\bullet$ We say that a point $P$ of $\PG(R,q)$ is \emph{$\rho$-covered} by a point set $\K\subset\PG(R,q)$ if $P$ lies in a $\rho$-subspace generated by $\rho+1$ points of $\K$ in general positions.  In this case,  the set~$\K$ \emph{$\rho$-covers} the point $P$.
If $\rho$ is clear by the context, one can say simply ``covered" and ``covers" (resp. ``uncovered'' and ``does not cover'').

We denote by $\dim(H)$ the dimension of a subspace $H$.
Let $V_1$ and $V_2$ be some subspaces of $\PG(N,q)$. Clearly, $\dim(V_1\cup V_2)\le N$. By Grassman formula, we have
\begin{equation*}
    \dim(V_1\cap V_2)=\dim(V_1)+\dim(V_2)-\dim(V_1\cup V_2).
\end{equation*}
This relation is used when we consider intersections of subspaces.

Let $\theta_{N,q}=(q^{N+1}-1)/(q-1)$ be the number of points in the projective space $\PG(N,q)$.

Let $A_u$ be a point of $\PG(R,q)$, $u=1,\ldots,\theta_{R,q}$. Point numbers are not fixed before the beginning of the iterative process. Points are numbered as they are included in the saturating set that we are building.

$\bullet$ For the iterative process, let $w\ge0$ be the step number.
Let
\begin{equation}\label{eq31:Pc0}
 \Pc_{0,R}\triangleq\{A_1,\ldots,A_{R}\}\subset\PG(R,q)
\end{equation}
be a \emph{starting} $R$-set  such that all its points are in general position. For example,
we can take $R$ arbitrary points of any arc in $\PG(R,q)$ as  $\Pc_{0,R}$. Recall that in $\PG(R,q)$, an arc is a set of points no $R + 1$ of which belong to the same hyperplane. Any $R + 1$ points of an arc are in general position. In particular, we can take the points in homogeneous coordinates
$
  A_1=(\underbrace{1,0,\ldots,0}_{R+1}), A_2=(\underbrace{0,1,0,\ldots,0}_{R+1}),\ldots, A_R=(\underbrace{0,\ldots,0,1,0}_{R+1}).
$\\

Let $\K_w$ be the current $(w+1)R$-set obtained after the
$w$-th step of the process. We put $\K_0=\Pc_{0,R}$. We have
\begin{equation*}
  \#\K_{w-1}=wR.
\end{equation*}
 In the $w$-th step, let
\begin{equation}\label{eq31:PcwR}
  \Pc_{w,R}\triangleq\{A_{wR+1},A_{wR+2},\ldots,A_{wR+R}\},~w\ge1,
\end{equation}
be an $R$-set  of points that are added to the current set $\K_{w-1}$ to obtain the next set $\K_{w}$;
\begin{equation}\label{eq31:Kw}
 \K_{w}=\K_{w-1}\cup\Pc_{w,R}=\Pc_{0,R}\cup\Pc_{1,R}\cup\ldots\cup\Pc_{w,R},~\#\K_{w}=(w+1)R,~w\ge1.
\end{equation}
We denote
\begin{equation}\label{eq31:Pcwi}
\Pc_{w,i}\triangleq\{A_{wR+1},A_{wR+2},\ldots,A_{wR+i}\} \subseteq\Pc_{w,R},~ i=1,2,\ldots,R,~ w\ge1;~\Pc_{w,0}\triangleq\emptyset.
\end{equation}
 We call $A_{wR+1}$ \emph{the $w$-th leading point}.

The $w$-step of the process consists of $R$ sub-steps; on the $i$-th sub-step we add to the current set a point $A_{wR+i}$. Thus, after the
$i$-th sub-step of the $w$-step we obtain the current set in the form
\begin{equation*}
  \K_{w-1}\cup\{A_{wR+1}\}\cup\ldots\cup\{A_{wR+i}\}=\K_{w-1}\cup\Pc_{w,i},~ i=1,2,\ldots,R.
\end{equation*}

To build $\Pc_{w,1},\ldots,\Pc_{w,R}$, we take a hyperplane of $\PG(R,q)$, say $\Pi_{w}$, skew to $\K_{w-1}$, i.e.
\begin{equation}\label{eq31:Piw}
 \Pi_{w}\subset\PG(R,q),~ \Pi_{w}\cap\K_{w-1}=\emptyset,~\dim(\Pi_{w})=R-1,~\#\Pi_{w}=\theta_{R-1,q}.
\end{equation}
In $\PG(R,q)$, a blocking set regarding hyperplanes contains at least $\theta_{1,q}$
points \cite{BoseBurt}. Therefore the saturating sets with the sizes proved in this paper cannot be a blocking set regarding hyperplanes. So, the needed hyperplane $\Pi_{w}$ exists.

We choose all the points  of $\Pc_{w,i}$, $i=1,2,\ldots,R$, from $\Pi_{w}$, i.e.
\begin{equation}\label{eq31:Pcwi subs Piw}
\Pc_{w,i}\subseteq \Pc_{w,R}\subset\Pi_w,~i=1,2,\ldots,R,~w\ge1.
\end{equation}

$\bullet$ We denote
\begin{equation}\label{eq31:mathfracBwi}
\BB_{w,i}\triangleq\binom{\#\K_{w-1}+i-1}{R-1}=\binom{wR+i-1}{R-1},~i\ge1.
\end{equation}
Here $\#\K_{w-1}+i-1=wR+i-1$ is the size of the current set $\K_{w-1}\cup\Pc_{w,i-1}$ obtained after the $(i-1)$-th sub-step of the $w$-th step of the process;
remind that $\Pc_{w,0}\triangleq\emptyset$, see~\eqref{eq31:Pcwi}. This implies that $\BB_{w,i}$ is the number of distinct $(R-1)$-subsets of the current set $\K_{w-1}\cup\Pc_{w,i-1}$.

For the given $i\in\{1,2,\ldots, R\}$, we consider $\BB_{w,i}$ \emph{distinct} $(R-1)$-subsets consisting of \emph{distinct points} of $\K_{w-1}\cup\Pc_{w,i-1}$. We denote such a subset by
$\D_{w,i}^j$ with
 \begin{equation}\label{eq31:Dwij}
 \D_{w,i}^j\subset\K_{w-1}\cup\Pc_{w,i-1},~\#\D_{w,i}^j=R-1,~j=1,\ldots,\BB_{w,i},~\D_{w,i}^u\ne\D_{w,i}^v \text{ if }u\ne v.
 \end{equation}
 All the points of $\D_{w,i}^j$ generate a subspace, say $ \V_{w,i}^j$, which meets $\Pi_{w}$ in a subspace, say $\T_{w,i}^j$.
In other words,
\begin{equation}\label{eq31:Vwij Twij}
  \V_{w,i}^j\triangleq\langle\D_{w,i}^j\rangle,~1\le\dim(\V_{w,i}^j)\le R-2, ~\T_{w,i}^j\triangleq\V_{w,i}^j\cap\Pi_{w}.
\end{equation}
The subspace $ \V_{w,i}^j$ has maximal possible dimension $R-2$ if and only if all the points of $\D_{w,i}^j$ are in general position.

  By \eqref{eq31:PcwR}--\eqref{eq31:Vwij Twij}, if in \eqref{eq31:Dwij} $i= R$ then $i-1= R-1,~\Pc_{w,i-1}=\Pc_{w,R-1}$, and one and only one $(R-1)$-subset $\D_{w,R}^j$ consists entirely of new points $A_{wR+1},A_{wR+2},\ldots,A_{wR+R-1}$. For definiteness we put here $j=1$. As all the new points lie in $\Pi_w$, see \eqref{eq31:Pcwi subs Piw}, we have
  \begin{equation}\label{eq31:i=R}
   \D_{w,R}^1\triangleq\Pc_{w,R-1}\subset\Pi_w,~\T_{w,R}^1=\V_{w,R}^1\subset\Pi_w,~\dim(\T_{w,R}^1)=\dim(\V_{w,R}^1)\le R-2.
  \end{equation}
  If $1\le i\le R-1,~j=1,\ldots,\BB_{w,i},$ or $i=R,~j=2,\ldots,\BB_{w,R},$  then
  \begin{align}\label{eq31:i<=R-1}
  \D_{w,i}^j\ne\Pc_{w,i-1};~  \D_{w,i}^j, \V_{w,i}^j\not\subset\Pi_w;~\T_{w,i}^j\subset\V_{w,i}^j;~ \dim(\T_{w,i}^j)=\dim(\V_{w,i}^j)-1\le R-3.
  \end{align}

 We denote
 \begin{equation}\label{eq31:mathfrak{T}}
 \mathfrak{T}_{w,i}\triangleq\bigcup_{j=1}^{\BB_{w,i}}\T_{w,i}^j,~i=1,2,\ldots, R;~\mathfrak{T}_{w,0}\triangleq\emptyset.
 \end{equation}
By \eqref{eq31:Vwij Twij}, the points of $\T_{w,i}^j$ are not in general position with the points of $\D_{w,i}^j$. Therefore, the set $\mathfrak{T}_{w,i}$ contains all the points $Q_k$ of $\Pi_w$ such that there exists at least one set of $R-1$ points of $\K_{w-1}\cup\Pc_{w,i-1}$ which are not in general position with $Q_k$. Thus, the next new point $A_{wR+i}$ cannot be taken from $\mathfrak{T}_{w,i}$. If we remove $\mathfrak{T}_{w,i}$ from $\Pi_{w}$ then we can take $A_{wR+i}$ from the obtained subset. We denote
 \begin{equation}\label{eq31:Piwi}
 \Pi_{w,i}\triangleq\Pi_{w}\setminus\mathfrak{T}_{w,i},~i=1,2,\ldots, R;~\Pi_{w,0}\triangleq\Pi_{w}.
 \end{equation}

We are going to take $A_{wR+i}$ from $\Pi_{w,i}$; this implies, see \eqref{eq31:mathfrak{T}}, \eqref{eq31:Piwi}, that all the points of $\Pc_{w,i}$, $i=1,2,\ldots,R$, belong to $\Pi_{w,1}$, i.e.
\begin{equation}\label{eq31:Pcw+1 subs Piw+1}
\Pc_{w,i}\subseteq \Pc_{w,R}\subset\Pi_{w,1}\subset\Pi_w,~i=1,2,\ldots,R,~w\ge1.
\end{equation}
This will allow us to provide that $\Pc_{w,R}$ covers all the points of $\Pi_{w}$, see Section \ref{subsec:PcwR} for details.

$\bullet$ Let $\U_w$ be the subset of $\PG(R,q)\setminus\K_w$ consisting of the points that are \emph{not $(R-1)$-covered} by~$\K_w$, $w\ge0$.
 Let $\Delta_{w}(\Pc_{w,R})$ be the number of new covered points in $\U_{w-1}$ after adding $\Pc_{w,R}$ to $\K_{w-1}$, $w\ge1$, i.e.
 \begin{equation}\label{eq31:Delta_def}
   \Delta_{w}(\Pc_{w,R})\triangleq\#\U_{w-1}-\#\U_{w}, ~w\ge1.
 \end{equation}

We denote $\delta_{w}(A_{wR+1})$ the number of new covered points in $\U_{w-1}\setminus\Pi_w$ after adding the $w$-th leading point $A_{wR+1}$ to $\K_{w-1}$. By \eqref{eq31:Pcw+1 subs Piw+1}, the points of $\Pi_{w,1}$ are candidates to be $A_{wR+1}$, see Sections \ref{subsec:PcwR} and \ref{subsec:leading} for details. Let $\mathbb{S}_w$ be the sum of the number of new covered points in $\U_{w-1}\setminus\Pi_w$ over all points $P$ of $\Pi_{w,1}$, i.e.
 \begin{equation}\label{eq31:Sw}
   \mathbb{S}_w\triangleq\sum_{P\in\Pi_{w,1}}\delta_{w}(P).
 \end{equation}


\subsection{The construction of the $R$-set $\Pc_{w,R}$}\label{subsec:PcwR}
\begin{lemma}\label{lem32:base}
  Let $i\in\{1,2,\ldots, R\}$, $w\ge1$.  Let any $R$ points of $\K_{w-1}\cup\Pc_{w,i-1}$ are in general position. Then the following holds:
  \begin{description}
    \item[(i)] All the points of $\D_{w,i}^j$, $j=1,\ldots,\BB_{w,i}$, are in general position.
    \item[(ii)]
The subspace $ \V_{w,i}^j$ has maximal possible dimension $R-2$, i.e.
    \begin{align}\label{eq32:Vwij}
& \V_{w,i}^j\triangleq\langle\D_{w,i}^j\rangle,~\dim(\V_{w,i}^j)=R-2,~\#\V_{w,i}^j=\theta_{R-2,q},~j=1,\ldots,\BB_{w,i}.
\end{align}
All the subspaces $ \V_{w,i}^j$ are distinct, i.e.
\begin{equation}\label{eq32:Vmij distinct}
 \V_{w,i}^u\ne\V_{w,i}^v\text{ if }u\ne v.
\end{equation}

The subspace $\T_{w,i}^j$ entirely lies in the hyperplane $\Pi_w$ and has  dimension $R-2$, if $i=R,~j=1$; otherwise $\T_{w,i}^j$ intersects $\Pi_w$ and has  dimension $R-3$. In other words,
\begin{align}
&\T_{w,R}^1=\V_{w,R}^1\subset\Pi_w,~\dim(\T_{w,R}^1)=R-2,~\#\T_{w,R}^1=\theta_{R-2,q};\label{eq32:Twij b}\db\\
&\T_{w,i}^j\triangleq\V_{w,i}^j\cap\Pi_{w},~\dim(\T_{w,i}^j)=R-3,~\#\T_{w,i}^j=\theta_{R-3,q},\label{eq32:Twij a}\db\\
&\text{if }1\le i\le R-1,~j=1,\ldots,\BB_{w,i},\text{ or }i= R,~j=2,\ldots,\BB_{w,R}.\notag
\end{align}

The union of the subspaces $\T_{w,i}^j$ with a fixed $i$, i.e. $\mathfrak{T}_{w,i}\triangleq\bigcup_{j=1}^{\BB_{w,i}}\T_{w,i}^j$, has the size lying in the following regions:
\begin{align}
&\theta_{R-3,q}\le\#\mathfrak{T}_{w,i}\le\BB_{w,i}\theta_{R-3,q}\text{ if }1\le i\le R-1;\label{eq32:sizemathfracT a}\db\\
&\theta_{R-2,q}\le\#\mathfrak{T}_{w,R}\le\BB_{w,R}\theta_{R-3,q}+q^{R-2}.\label{eq32:sizemathfracT b}
\end{align}
The subset of the hyperplane $\Pi_w$ obtained by moving of $\mathfrak{T}_{w,i}$, i.e. $\Pi_{w,i}\triangleq\Pi_{w}\setminus\mathfrak{T}_{w,i}$,  has the size lying in the following regions:
\begin{align}
&q^{R-2}(q+1)\ge\#\Pi_{w,i}\ge\theta_{R-3,q}\left(\frac{q^{R}-1}{q^{R-2}-1}-\BB_{w,i}\right)\text{ if }1\le i\le R-1;\label{eq32:sizePwi a}\db\\
&q^{R-1}\ge\#\Pi_{w,R}\ge\theta_{R-3,q}\left(\frac{q^{R}-1}{q^{R-2}-1}-\BB_{w,R}\right)-q^{R-2}\notag\db\\
&\label{eq32:sizePwi b}=q^{R-3}\left(\frac{q^{R}-1}{q^{R-2}-1}-\BB_{w,R}-q\right)+
\left\{\begin{array}{@{}c@{}c@{}}0
&\text{ if }R=3\\
\theta_{R-4,q}\left(\frac{q^{R}-1}{q^{R-2}-1}-\BB_{w,R}\right)&\text{ if }R\ge4
\end{array}
\right..
\end{align}
We have the following embeddings:
\begin{align}
&\mathfrak{T}_{w,i}\subseteq\mathfrak{T}_{w,i+1},~i\neq R;~\Pi_{w,i}\subseteq\Pi_{w,i-1}.\label{eq32:subset}
\end{align}
    \item[(iii)] Let $\#\Pi_{w,i}\ge1$. Any point of $\Pi_{w,i}=\Pi_w\setminus\mathfrak{T}_{w,i}$ is in general position with any $R-1$ points of $\K_{w-1}\cup\Pc_{w,i-1}$.
    \item[(iv)]   Let $\#\Pi_{w,i}\ge1$. Let $A_{wR+i}$ be any point of $\Pi_{w,i}$. Then any $R$ points of $\K_{w-1}\cup\Pc_{w,i-1}\cup \{A_{wR+i}\}=\K_{w-1}\cup\Pc_{w,i}$ (and, in particular, of $\K_{w-1}\cup\Pc_{w,R-1}\cup \{A_{wR+R}\}=\K_{w-1}\cup\Pc_{w,R}=\K_w$) are in general position.
  \end{description}
\end{lemma}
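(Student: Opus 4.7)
The plan is to prove the four parts in order, exploiting the fact that part (i) is immediate, part (ii) is where essentially all of the bookkeeping sits, and parts (iii) and (iv) then follow in a few lines.

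For part (i), I would note that general position of $R$ points of $\K_{w-1}\cup\Pc_{w,i-1}$ means linear independence of the corresponding homogeneous vectors in $\F_q^{R+1}$; any subset of independent vectors is independent, so any $R-1$ of these points span an $(R-2)$-subspace, i.e.\ are themselves in general position. Applied to each $\D_{w,i}^j$ this gives (i) and also gives $\dim(\V_{w,i}^j)=R-2$, the first assertion of (ii). For the dimension of $\T_{w,i}^j$, I would separate the two regimes recorded in \eqref{eq31:i=R}--\eqref{eq31:i<=R-1}. If $i=R$ and $j=1$, then $\D_{w,R}^1=\Pc_{w,R-1}\subset\Pi_w$ by \eqref{eq31:Pcwi subs Piw}, so $\V_{w,R}^1\subset\Pi_w$ and $\T_{w,R}^1=\V_{w,R}^1$ has dimension $R-2$. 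In every other case $\D_{w,i}^j$ contains at least one point of $\K_{w-1}$, which by \eqref{eq31:Piw} is skew to $\Pi_w$, hence $\V_{w,i}^j\not\subset\Pi_w$; since $\dim\V_{w,i}^j+\dim\Pi_w=(R-2)+(R-1)=2R-3\ge R$, the two subspaces span all of $\PG(R,q)$ and the Grassmann formula gives $\dim\T_{w,i}^j=(R-2)+(R-1)-R=R-3$. Distinctness \eqref{eq32:Vmij distinct} is then a one-line contradiction: if $\V_{w,i}^u=\V_{w,i}^v$ for $u\ne v$, the union $\D_{w,i}^u\cup\D_{w,i}^v$ would contain at least $R$ points of $\K_{w-1}\cup\Pc_{w,i-1}$ all lying in a single $(R-2)$-subspace, contradicting the hypothesis on $R$ points.

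The four size bounds in (ii) are then purely additive counting. For $\#\mathfrak{T}_{w,i}$, the lower bound comes from a single term (namely $\theta_{R-3,q}$ when $1\le i\le R-1$, and $\#\T_{w,R}^1=\theta_{R-2,q}$ when $i=R$), and the upper bound is the union bound over the $\BB_{w,i}$ subspaces: when $1\le i\le R-1$ every summand equals $\theta_{R-3,q}$, while for $i=R$ the single anomalous term $\T_{w,R}^1$ contributes $\theta_{R-2,q}=q^{R-2}+\theta_{R-3,q}$ and the remaining $\BB_{w,R}-1$ terms contribute $\theta_{R-3,q}$ each, which collects into $\BB_{w,R}\theta_{R-3,q}+q^{R-2}$. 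Passing to $\Pi_{w,i}=\Pi_w\setminus\mathfrak{T}_{w,i}$ with $\#\Pi_w=\theta_{R-1,q}$ and using the identity $\theta_{R-1,q}=\theta_{R-3,q}\cdot(q^R-1)/(q^{R-2}-1)$ yields \eqref{eq32:sizePwi a}--\eqref{eq32:sizePwi b}, the final decomposition in \eqref{eq32:sizePwi b} being an algebraic rewriting via $\theta_{R-3,q}=q^{R-3}+\theta_{R-4,q}$ for $R\ge4$ (and $\theta_{0,q}=1$ for $R=3$). The embeddings \eqref{eq32:subset} come from the fact that every $(R-1)$-subset of $\K_{w-1}\cup\Pc_{w,i-1}$ is also an $(R-1)$-subset of $\K_{w-1}\cup\Pc_{w,i}$, so each $\T_{w,i}^j$ reappears in the family defining $\mathfrak{T}_{w,i+1}$.

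Part (iii) is a one-line contradiction: given $P\in\Pi_{w,i}$ and $R-1$ points $D\subset\K_{w-1}\cup\Pc_{w,i-1}$, the set $D$ equals some $\D_{w,i}^j$ by the enumeration, so if $P$ were not in general position with $D$, then $P\in\V_{w,i}^j$; combined with $P\in\Pi_w$ this forces $P\in\V_{w,i}^j\cap\Pi_w=\T_{w,i}^j\subseteq\mathfrak{T}_{w,i}$, contradicting $P\in\Pi_w\setminus\mathfrak{T}_{w,i}$. Part (iv) is then immediate: any $R$-subset of $\K_{w-1}\cup\Pc_{w,i}$ either avoids $A_{wR+i}$, in which case general position holds by hypothesis, or consists of $A_{wR+i}$ together with $R-1$ points of $\K_{w-1}\cup\Pc_{w,i-1}$, which are in general position with $A_{wR+i}$ by (iii). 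The main obstacle is purely one of bookkeeping, namely keeping the anomalous dimension of $\T_{w,R}^1$ correctly separated from the generic case when deriving \eqref{eq32:sizemathfracT b} and \eqref{eq32:sizePwi b}; everything else reduces to the Grassmann formula and elementary $\theta$-identities.
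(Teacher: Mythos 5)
Your proof is correct and takes essentially the same approach as the paper; the paper's own proof is terse (it mostly cites the definitions and constructions of Section~2.1 and gives explicitly only the distinctness argument for $\V_{w,i}^u\ne\V_{w,i}^v$), while you fill in the Grassmann-formula dimension count for $\T_{w,i}^j$, the union-bound size estimates, and the contrapositive argument for part~(iii) that the paper leaves implicit.
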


\begin{proof}
\begin{description}
  \item[(i)]  The assertion follows from the  hypothesis and from \eqref{eq31:mathfracBwi}, \eqref{eq31:Dwij}.

  \item[(ii)]  The assertions follow from the case (i) and from the constructions \eqref{eq31:mathfracBwi}--\eqref{eq31:Piwi}. In \eqref{eq32:sizePwi a} and \eqref{eq32:sizePwi b} we do simple transformations. About \eqref{eq32:Vmij distinct} we note that if $u\ne v$ but $\V_{w,i}^u=\V_{w,i}^v$, then $\D_{w,i}^u\ne\D_{w,i}^v$, $\#(\D_{w,i}^u\cup\D_{w,i}^v)\ge R$, $\langle\D_{w,i}^u\rangle=\langle\D_{w,i}^v\rangle$. Therefore, the set $\D_{w,i}^u\cup\D_{w,i}^v$, belonging to $\K_{w-1}\cup\Pc_{w,i-1}$, lies in the $(R-2)$-subspace $\langle\D_{w,i}^u\rangle$ and any $R$ of its points are not in general positions; contradiction with hypothesis.

  \item[(iii)]  By the construction \eqref{eq31:Vwij Twij}, the points of $\V_{w,i}^j$ and $\T_{w,i}^j$ are not in general position with the points of $\D_{w,i}^j$. By \eqref{eq31:Vwij Twij}, \eqref{eq31:mathfrak{T}}, the set $\mathfrak{T}_{w,i}$ contains all the points of $\Pi_w$ such that for every point there exist at least one set of  $R-1$ points of $\K_{w-1}\cup\Pc_{w,i-1}$
       which are not in general position with it. The assertion follows.
    \item[(iv)] It follows from the case (iii).
\end{description}
\end{proof}

  \begin{corollary}\label{cor32:a}
  Let $w\ge1$. Let $\#\Pi_{w,i}\ge1$, $i=1,2,\ldots, R$.  Assume that we form the $R$-set $\Pc_{w,R}$ of \eqref{eq31:PcwR}, obtaining sequentially the sets $\Pc_{w,1}, \Pc_{w,2},\ldots,\Pc_{w,R}$ of
    \eqref{eq31:Pcwi} so that the point $A_{wR+i}$ is taken from $\Pi_{w,i}$ of~\eqref{eq31:Piwi}, i.e.
 \begin{equation}\label{eq31:sequen}
   A_{wR+i}\in\Pi_{w,i},~w\ge1,~i=1,2,\ldots, R.
 \end{equation}
 Then for all $w\ge1$ and all $i=1,2,\ldots, R+1$, any $R$ points of $\K_{w-1}\cup\Pc_{w,i-1}$ (in particular, of $\K_{w-1}\cup\Pc_{w,R}=\K_w$) are in general position.
 \end{corollary}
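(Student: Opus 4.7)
The plan is to prove the corollary by double induction: an outer induction on $w \geq 1$ and, for each fixed $w$, an inner induction on $i$ running from $1$ to $R+1$. The outer hypothesis at step $w$ is exactly what is needed to start the inner induction at $i=1$, and the inner conclusion at $i = R+1$ will in turn supply the outer hypothesis for step $w+1$, closing the loop.

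For the outer base case $w = 1$, we need any $R$ points of $\K_0 = \Pc_{0,R}$ to be in general position; this is immediate from \eqref{eq31:Pc0}, since $\Pc_{0,R}$ is by definition a set of exactly $R$ points chosen in general position (the explicit coordinate-vector example at the end of \eqref{eq31:Pc0} witnesses this).

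For the inductive step, assume at the outer level that any $R$ points of $\K_{w-1}$ are in general position, and run the inner induction on $i$. The inner base $i=1$ is just the outer hypothesis, since $\Pc_{w,0} = \emptyset$ by \eqref{eq31:Pcwi}, so $\K_{w-1}\cup\Pc_{w,0}=\K_{w-1}$. For the inner step, suppose the claim holds for some $i \in \{1,\dots,R\}$, i.e.\ any $R$ points of $\K_{w-1}\cup\Pc_{w,i-1}$ are in general position. The hypothesis $\#\Pi_{w,i}\ge 1$ allows us to pick a point $A_{wR+i}\in\Pi_{w,i}$ as prescribed by \eqref{eq31:sequen}, and Lemma~\ref{lem32:base}(iv) then yields directly that any $R$ points of $\K_{w-1}\cup\Pc_{w,i-1}\cup\{A_{wR+i}\}=\K_{w-1}\cup\Pc_{w,i}$ are again in general position. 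This advances the inner induction to $i+1$; at $i=R+1$ we obtain the statement for $\K_{w-1}\cup\Pc_{w,R}=\K_w$, which is precisely the outer inductive hypothesis for step $w+1$.

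The real work has already been absorbed into Lemma~\ref{lem32:base}: the forbidden loci $\mathfrak{T}_{w,i}$ were defined there precisely to collect every point of $\Pi_w$ that would violate general position with some $(R-1)$-subset of the current set, and part~(iv) of that lemma is what makes the inductive step automatic. Consequently the corollary itself presents no real obstacle; it is essentially bookkeeping that assembles the $w$- and $i$-layers of induction and records that the running assumption $\#\Pi_{w,i}\ge 1$ is exactly the condition keeping the iterative process alive at every sub-step.
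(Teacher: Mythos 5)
Your proof is correct and follows exactly the same approach as the paper's (which just says, after checking the base case $w=1$, $i=1$, that ``the assertions can be proved by induction on $w$ and $i$ by using Lemma~\ref{lem32:base}''). You have simply written out the double induction that the paper leaves implicit, correctly identifying Lemma~\ref{lem32:base}(iv) as the engine of the inner step and the hand-off from $i=R+1$ to $w+1$ as what closes the outer loop.
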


 \begin{proof}
  By \eqref{eq31:Pc0} and \eqref{eq31:Pcwi}, for $w=1$, $i=1$,  any $R$ points of $\K_{w-1}\cup\Pc_{w,i-1}=\K_{0}\cup\Pc_{1,0}=\K_{0}\cup\emptyset=\Pc_{0,R}$   are in general position.
 Then the assertions can be proved by induction on $w$ and $i$ by using Lemma~\ref{lem32:base}.
 \end{proof}

 \begin{remark}\label{rem32}
   To provide the assertions of  Corollary \ref{cor32:a}, it is sufficient to take any point of $\Pi_{w,1}$ as the $w$-th leading point $A_{wR+1}$. However, to obtain a small saturating set, the leading point should have also some other additional properties, see Section \ref{subsec:leading}.
 \end{remark}

 \begin{corollary}\label{cor32:b}
   Under the conditions of Corollary \emph{\ref{cor32:a}}, the $R$-set $\Pc_{w,R}$ covers all the points of the hyperplane $\Pi_{w}$.
 \end{corollary}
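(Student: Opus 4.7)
The plan is essentially a one–step dimension count, so I will present it as a brief sketch indicating where each ingredient comes from.

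First I would invoke Corollary \ref{cor32:a} applied to $w$ and $i=R+1$: under the stated hypothesis, any $R$ points of $\K_{w-1}\cup\Pc_{w,R}=\K_w$ are in general position. Restricting this conclusion to the $R$-subset $\Pc_{w,R}\subset\K_w$, the $R$ points $A_{wR+1},\ldots,A_{wR+R}$ are themselves in general position, so by the definition in Section~\ref{subsec:Not&Def} they span a subspace $\langle\Pc_{w,R}\rangle$ of dimension exactly $R-1$.

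Next I would use the containment \eqref{eq31:Pcw+1 subs Piw+1}, namely $\Pc_{w,R}\subset\Pi_{w,1}\subset\Pi_w$, to conclude $\langle\Pc_{w,R}\rangle\subseteq\Pi_w$. Since $\Pi_w$ is also an $(R-1)$-dimensional subspace (a hyperplane of $\PG(R,q)$, see \eqref{eq31:Piw}) and $\langle\Pc_{w,R}\rangle$ has the same dimension, the two subspaces coincide: $\langle\Pc_{w,R}\rangle=\Pi_w$.

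Finally, for any point $P\in\Pi_w$ we then have $P\in\langle\Pc_{w,R}\rangle$, and $\Pc_{w,R}$ is a set of $R$ points in general position; by the definition of $(R-1)$-covering in Section~\ref{subsec:Not&Def} this means $P$ is $(R-1)$-covered by $\Pc_{w,R}$, proving the corollary. There is no real obstacle: the entire content is that Corollary~\ref{cor32:a} forces the $R$ newly chosen points to be in general position inside $\Pi_w$, and in a hyperplane $R$ points in general position automatically span the whole hyperplane.
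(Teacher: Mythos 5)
Your proposal is correct and matches the paper's approach exactly: the paper's proof is the one-line ``We use Corollary~\ref{cor32:a} and \eqref{eq31:Pcw+1 subs Piw+1},'' and your argument simply unpacks this --- $R$ points of $\Pc_{w,R}$ in general position (from Corollary~\ref{cor32:a} with $i=R+1$) lying inside the hyperplane $\Pi_w$ (from \eqref{eq31:Pcw+1 subs Piw+1}) must span $\Pi_w$ by dimension count, so every point of $\Pi_w$ is $(R-1)$-covered.
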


 \begin{proof}
   We use Corollary \ref{cor32:a} and \eqref{eq31:Pcw+1 subs Piw+1}.
 \end{proof}

\begin{corollary}\label{cor32:c}
  To guarantee the condition $\#\Pi_{w,i}\ge1$, it is sufficient that
  \begin{align}\label{eq32:suffic}
   \frac{q^{R}-1}{q^{R-2}-1}>\left\{\begin{array}{lcl}
                                      \BB_{w,i} & \text{if} & 1\le i\le R-1 \\
                                     \BB_{w,R}+q & \text{if} & i=R
                                    \end{array}\right..
  \end{align}
\end{corollary}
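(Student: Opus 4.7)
The plan is to deduce the corollary directly from the lower bounds on $\#\Pi_{w,i}$ already established in part (ii) of Lemma \ref{lem32:base}, using the elementary fact that $\#\Pi_{w,i}$ is a non-negative integer: any strictly positive real lower bound automatically forces $\#\Pi_{w,i}\ge 1$. So the whole task reduces to checking that hypothesis \eqref{eq32:suffic} makes the lower bounds in \eqref{eq32:sizePwi a}, \eqref{eq32:sizePwi b} strictly positive.

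For the range $1\le i\le R-1$, I would invoke the lower bound \eqref{eq32:sizePwi a}, namely $\#\Pi_{w,i}\ge\theta_{R-3,q}\bigl(\frac{q^{R}-1}{q^{R-2}-1}-\BB_{w,i}\bigr)$. Under the assumption $\frac{q^{R}-1}{q^{R-2}-1}>\BB_{w,i}$ the parenthesised factor is strictly positive, and since $R\ge 3$ gives $\theta_{R-3,q}\ge 1$, the whole product is strictly positive. Integrality of $\#\Pi_{w,i}$ then yields $\#\Pi_{w,i}\ge 1$.

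For the boundary case $i=R$, I would use the second (equivalent) form of \eqref{eq32:sizePwi b}, which is tailor-made for this argument because the $-q^{R-2}$ correction has been absorbed into a $-q$ inside the parentheses:
\begin{equation*}
\#\Pi_{w,R}\ge q^{R-3}\Bigl(\tfrac{q^{R}-1}{q^{R-2}-1}-\BB_{w,R}-q\Bigr)
+\begin{cases}0 & \text{if } R=3\\ \theta_{R-4,q}\bigl(\tfrac{q^{R}-1}{q^{R-2}-1}-\BB_{w,R}\bigr) & \text{if } R\ge 4\end{cases}.
\end{equation*}
Under the hypothesis $\frac{q^{R}-1}{q^{R-2}-1}>\BB_{w,R}+q$, the first parenthesised factor is strictly positive, and when $R\ge 4$ the factor $\frac{q^{R}-1}{q^{R-2}-1}-\BB_{w,R}>q>0$ makes the second summand (with coefficient $\theta_{R-4,q}\ge 1$) also strictly positive. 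When $R=3$, the second summand vanishes but the first has coefficient $q^{R-3}=1$, and the parenthesised factor alone is strictly positive. In every case the real lower bound on $\#\Pi_{w,R}$ is strictly positive, so again $\#\Pi_{w,R}\ge 1$.

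There is really no hard step here; the content of the corollary is just the unpacking of the two lower bounds in Lemma \ref{lem32:base}(ii). The only minor subtlety worth highlighting is the reason for the extra $+q$ in the condition for $i=R$: it is precisely what is needed to absorb the $-q^{R-2}$ penalty coming from \eqref{eq32:Twij b}, i.e.\ from the fact that one of the $(R-1)$-subsets $\D_{w,R}^1$ already lies entirely in $\Pi_w$ and contributes an $(R-2)$-subspace rather than an $(R-3)$-subspace to $\mathfrak{T}_{w,R}$.
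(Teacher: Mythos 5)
Your proposal is correct and takes essentially the same approach as the paper, which simply says the assertion follows from \eqref{eq32:sizePwi a} and \eqref{eq32:sizePwi b}; you have just spelled out the positivity check, including the mild case split at $R=3$ versus $R\ge 4$, which the paper leaves implicit.
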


\begin{proof}
  The assertion follows from \eqref{eq32:sizePwi a} and \eqref{eq32:sizePwi b}.
\end{proof}


\subsection{The choice of the $w$-th leading point $A_{wR+1}\in\Pi_{w,1}$}\label{subsec:leading}
Assume that $i=1$, $w\ge1$, and any $R$ points of $\K_{w-1}\cup\Pc_{w,1-1}=\K_{w-1}\cup\emptyset=\K_{w-1}$ are in general position. Let $\#\Pi_{w,1}\ge1$, see Corollary \ref{cor32:c}.

 To investigate the sum of the number of new covered points in $\U_{w-1}\setminus\Pi_w$ over all points $P$ of $\Pi_{w,1}$, $\mathbb{S}_w$ \eqref{eq31:Sw}, \emph{we fix a point $B\in\U_{w-1}\setminus\Pi_w$}, i.e. $B$ is not covered by $\K_{w-1}$ and $B\notin\Pi_w$.

 Let $j=1,\ldots,\BB_{w,1}$. By Lemma \ref{lem32:base}(i), all the points of the $R$-set $\D_{w,1}^j\cup\{B\}$  are in general position, otherwise $B$ would be covered by $\K_{w-1}$. The points of $\D_{w,1}^j\cup\{B\}$  define a hyperplane, say $\Sigma_{w,B}^j$, with
  \begin{equation}\label{eq33:Sigma}
   \Sigma_{w,B}^j\triangleq\langle\D_{w,1}^j\cup\{B\}\rangle\subset\PG(R,q),~\dim(\Sigma_{w,B}^j)=R-1,~\#\Sigma_{w,B}^j=\theta_{R-1,q}.
  \end{equation}
 As $B\notin\Pi_w$, we have $\Sigma_{w,B}^j\ne\Pi_w$. So, the hyperplanes $\Sigma_{w,B}^j$ and $\Pi_w$ intersect. The intersection of $\Sigma_{w,B}^j$ and $\Pi_w$ is an $(R-2)$-subspace, say $\Gamma_{w,B}^j$, such that
  \begin{equation}\label{eq33:Gamma}
    \Gamma_{w,B}^j\triangleq\Sigma_{w,B}^j\cap\Pi_w,~\dim(\Gamma_{w,B}^j)=R-2,~\#\Gamma_{w,B}^j=\theta_{R-2,q}.
  \end{equation}

  By \eqref{eq31:Vwij Twij}, \eqref{eq32:Vwij}, \eqref{eq33:Sigma}, the $(R-2)$-subspace $\V_{w,1}^j$ lies in $\Sigma_{w,B}^j$. As the $(R-2)$-subspaces $\V_{w,1}^j$ and $\Gamma_{w,B}^j$ lie in  the same hyperplane $\Sigma_{w,B}^j$ they meet in some $(R-3)$-subspace. This $(R-3)$-subspace is $\T_{w,1}^j$ since $\Gamma_{w,B}^j\subset\Pi_w$, see \eqref{eq33:Gamma}, and $\T_{w,1}^j=\V_{w,1}^j\cap\Pi_w\subset\Pi_w$, see \eqref{eq32:Twij a}.
 Thus,
\begin{equation}\label{eq33:Tw1j}
  \T_{w,1}^j=\V_{w,1}^j\cap\Pi_w=\V_{w,1}^j\cap\Gamma_{w,B}^j.
\end{equation}

The average value $ \delta_{w}^\text{aver}$ of the number of new covered points in $\U_{w-1}\setminus\Pi_w$ after adding $P$ to $\K_{w-1}$, $\delta_{w}(P)$, over all points of $\Pi_{w,1}$ is (see \eqref{eq31:Sw}, \eqref{eq32:sizePwi a})
 \begin{equation}\label{eq33:aver}
    \delta_{w}^\text{aver}\triangleq\frac{\mathbb{S}_w}{\#\Pi_{w,1}}=
    \frac{\sum\limits_{P\in\Pi_{w,1}}\delta_{w}(P)}{\#\Pi_{w,1}}\ge
    \frac{\sum\limits_{P\in\Pi_{w,1}}\delta_{w}(P)}{q^{R-2}(q+1)}.
 \end{equation}
Obviously, there exists a non-empty subset of $\Pi_{w,1}$  such that for every its point $P$ we have $\delta_{w}(P)\ge\delta_{w}^\text{aver}$.
 Any point from this subset can be taken as the $w$-th leading point $A_{wR+1}\in\Pi_{w,1}$. So,
 \begin{equation}\label{eq33:delta Alead}
  \delta_{w}(A_{wR+1})\ge\frac{\sum\limits_{P\in\Pi_{w,1}}\delta_{w}(P)}{\#\Pi_{w,1}}\ge
    \frac{\sum\limits_{P\in\Pi_{w,1}}\delta_{w}(P)}{q^{R-2}(q+1)}.
 \end{equation}


\subsection{Stages of the iterative process of Construction B}\label{subsec:iterproc}
 The iterative process for the construction of an $(R-1)$-saturating set is as follows:
\begin{itemize}
  \item We assign the starting set $\Pc_0$ in accordance with \eqref{eq31:Pc0} and put $w=0$, $\K_0=\Pc_0$.
  \item In every $w$-th step, $w\ge1$, we do the following:
  \begin{description}
    \item[$\bullet\bullet$]
    Choose a hyperplane $\Pi_w$ of $\PG(R,q)$ skew to $\K_{w-1}$, see \eqref{eq31:Piw}.

\item[$\bullet\bullet$]
Sequentially execute  $R$ sub-steps with $i=1,2,\ldots,R$.
    \item[$\bullet\bullet\bullet$]
    Put $i=1$. Form the sets $\D_{w,1}^j$, the subspaces $ \V_{w,1}^j$,  $\T_{w,1}^j$, the set $ \mathfrak{T}_{w,1}$, and the subset $\Pi_{w,1}\subset\Pi_w$,
  in accordance with \eqref{eq31:Dwij}, \eqref{eq31:Vwij Twij}, \eqref{eq31:i<=R-1}--\eqref{eq31:Piwi}. Choose the $w$-th leading point $A_{wR+1}$  from  $\Pi_{w,1}$ in accordance with Section \ref{subsec:leading}. This provides that any $R$  points from $\K_{w-1}\cup\{A_{wR+1}\}$ are in general position, see Corollary~\ref{cor32:a}.
    Also, this gives a base to obtain  a saturating set of small size, see \eqref{eq33:aver}, \eqref{eq33:delta Alead}.

      \item[$\bullet\bullet\bullet$]   For $i=2,3,\ldots, R$, sequentially form the sets $\D_{w,i}^j$, the subspaces $ \V_{w,i}^j$,  $\T_{w,i}^j$, the set $ \mathfrak{T}_{w,i}$, and the subset $\Pi_{w,i}\subset\Pi_w$,
  in accordance with \eqref{eq31:Dwij}--\eqref{eq31:Piwi}, and choose the point $A_{wR+i}$  from  $\Pi_{w,i}$ in accordance with Corollary \ref{cor32:a}. This forms the $R$-set  $\Pc_{w,R}\subset\Pi_w$ and the new current set $\K_w=\K_{w-1}\cup\Pc_{w,R}$ any $R$  points of which are in general position, see Corollary \ref{cor32:a},  Also, this provides that $\Pc_{w,R}$ covers all points of $\Pi_w$, see Corollary \ref{cor32:b}.

\item[$\bullet\bullet$] Count (or make an estimate of) the values of $\Delta_{w}(\Pc_{w,R})$ and $\#\U_{w}$, see \eqref{eq31:Delta_def} and Section \ref{sec:est_size}.
  \end{description}

  \item The process ends when $ \#\U_{w}\le R$. Finally, in the last $(w+1)$-st step, we add to $\K_w$ at most $R$ uncovered points to obtain an $(R-1)$-saturating set.
\end{itemize}

Note that in Construction B proposed in this paper, in every $w$-th step we provide that any $R$  points of  the current set $\K_w $ are in general position , whereas in Construction~A of \cite{DMP-AMC2021} only the points of the starting set are guaranteed to have this property.
This explains the improvement of the upper bounds obtained in this paper.

\begin{example}
  Let $R=3$. Then, see \eqref{eq31:Pc0},
 $\Pc_{0,3}\triangleq\{A_1,A_2,A_3\}\subset\PG(3,q);~
A_1=(1,0,0,0), A_2=(0,1,0,0), A_3=(0,0,1,0).$ Also, $\K_0=\Pc_{0,3}$ and $\#\K_{w-1}=3w$.
By \eqref{eq31:PcwR}--\eqref{eq31:Pcwi}, $\Pc_{w,1}=\{A_{3w+1}\}$, $\Pc_{w,2}=\{A_{3w+1},\A_{3w+2}\}$, $\Pc_{w,3}=\{A_{3w+1},\A_{3w+2},A_{3w+3}\}$ where $A_{3w+1}$ is the $w$-th leading point.

The $w$-step of the process consists of $3$ sub-steps; on the $i$-th sub-step we add to the current set a point $A_{3w+i}$. After the
$i$-th sub-step of the $w$-step we obtain the current set in the form
$ \K_{w-1}\cup\{A_{3w+1}\}\cup\ldots\cup\{A_{3w+i}\}=\K_{w-1}\cup\Pc_{w,i},~ i=1,2,3.$

The hyperplane $\Pi_{w}$, skew to $\K_{w-1}$, is a plane of size $q^2+q+1$;
$\BB_{w,i}=\binom{3w+i-1}{2}$ is the number of distinct pairs points of the current set $\K_{w-1}\cup\Pc_{w,i-1}$, $i=1,2,3.$  Such pairs are denoted by $\D_{w,i}^j$, $j=1,\ldots,\BB_{w,i}$; they generate $\BB_{w,i}$ distinct lines $\V_{w,i}^j$, see~\eqref{eq32:Vmij distinct}.

Let $i=1$. By construction, any $3$ points of $\K_{w-1}\cup\Pc_{w,1-1}=\K_{w-1}\cup\emptyset=\K_{w-1}$ are in general position. Every line $\V_{w,1}^j$ meets the plane $\Pi_{w}$ in the point $\T_{w,1}^j$. We remove the union $\mathfrak{T}_{w,1}=\bigcup_{j=1}^{\BB_{w,1}}\T_{w,1}^j$ of these points from the plane $\Pi_{w}$  and obtain $\Pi_{w,1}=\Pi_{w}\setminus\mathfrak{T}_{w,1}$. Then we choose the $w$-th leading point $A_{3w+1}$ from  $\Pi_{w,1}$ in accordance with Section \ref{subsec:leading}.
The choice of $A_{3w+1}$ from  $\Pi_{w,1}$ provides that any $3$  points from $\K_{w-1}\cup\{A_{3w+1}\}$ are in general position, see Corollary~\ref{cor32:a}.
 Moreover, we take a point of $\Pi_{w,1}$  providing \eqref{eq33:aver}, \eqref{eq33:delta Alead}, that gives a base to obtain  a saturating set of small size.

 Let $i=2$. By above, any $3$ points of $\K_{w-1}\cup\Pc_{w,2-1}=\K_{w-1}\cup\{A_{3w+1}\}$ are in general position. Every line $\V_{w,2}^j$ meets  $\Pi_{w}$ in the point $\T_{w,2}^j$. We remove the union $\mathfrak{T}_{w,2}=\bigcup_{j=1}^{\BB_{w,2}}\T_{w,2}^j$ from  $\Pi_{w}$  and obtain $\Pi_{w,2}=\Pi_{w}\setminus\mathfrak{T}_{w,2}$. Then we take any point from  $\Pi_{w,2}$ as $A_{3w+2}$ that provides that any $3$  points from $\K_{w-1}\cup\{A_{3w+1},A_{3w+2}\}$ are in general position, see Corollary~\ref{cor32:a}.

 Let $i=3$. By above, any $3$ points of $\K_{w-1}\cup\Pc_{w,3-1}=\K_{w-1}\cup\{A_{3w+1},A_{3w+2}\}$ are in general position. The 2-set $\D_{w,3}^1$ and the line $\V_{w,3}^1=\langle\D_{w,3}^1\rangle$ lie  in the plane $\Pi_{w}$ that implies $\T_{w,3}^1=\V_{w,3}^1\subset\Pi_w$. For $j\ge2$, every line $\V_{w,3}^j$ meets  $\Pi_{w}$ in the point $\T_{w,3}^j$. We remove the union $\mathfrak{T}_{w,3}=\bigcup_{j=1}^{\BB_{w,3}}\T_{w,3}^j$ from  $\Pi_{w}$  and obtain $\Pi_{w,3}=\Pi_{w}\setminus\mathfrak{T}_{w,3}$. Then we take any point from  $\Pi_{w,3}$ as $A_{3w+3}$ that provides that any $3$  points from $\K_{w-1}\cup\{A_{3w+1},A_{3w+2},A_{3w+3}\}$ are in general position, see Corollary~\ref{cor32:a}.
 \end{example}


\section{Estimates of sizes of the saturating sets obtained by  Construction~B. Upper bound on the length function $\ell_q(R+1,R)$}\label{sec:est_size}

\subsection{Estimates of the size of $\delta_{w}(A_{wR+1})$}\label{subsec:size leader}
We estimate the number of new covered points in $\U_{w-1}\setminus\Pi_w$ after adding the $w$-th leading point $A_{wR+1}$ to $\K_{w-1}$, $\delta_{w}(A_{wR+1})$, counting how many times an uncovered point is covered when adding $A_{wR+1}$ to $\K_{w-1}$.

We consider the relations \eqref{eq33:Sigma}--\eqref{eq33:Tw1j} and a point $B\in\U_{w-1}\setminus\Pi_w$, see Section \ref{subsec:leading}. Recall that the points of $ \T_{w,1}^j$ are not in general position with the points of $\D_{w,1}^j$. We denote
 \begin{equation}\label{eq41:wideGamma}
 \widehat{\Gamma}_{w,B}^j=\Gamma_{w,B}^j\setminus \T_{w,1}^j.
 \end{equation}
 Every point of $\widehat{\Gamma}_{w,B}^j$ is in general position with the points of $\D_{w,1}^j$; also,
 \begin{equation*}
   \#\widehat{\Gamma}_{w,B}^j=\theta_{R-2,q}-\theta_{R-3,q}=q^{R-2}.
 \end{equation*}
 By construction, the $q^{R-2}$-set $\widehat{\Gamma}_{w,B}^j$ is the affine point set of the $(R-2)$-subspace $\Gamma_{w,B}^j$.

   Thus, the hyperplane
  $\Sigma_{w,B}^j=\langle\D_{w,1}^j\cup\{B\}\rangle$ is generated  $q^{R-2}$ times when we  add in sequence all the points of $\Pi_{w,1}\subset\Pi_w$ to $\K_{w-1}$ for the calculation of $\mathbb{S}_w$ \eqref{eq31:Sw}.

The above is true for all $\BB_{w,1}$ sets $\D_{w,1}^j$. Moreover, consider the sets $\D_{w,1}^u$ and $\D_{w,1}^v$ with $u\ne v$. By the definition \eqref{eq31:Dwij}, $\D_{w,1}^u\ne\D_{w,1}^v$. The points of $\D_{w,1}^u\cup\{B\}$ (resp. $\D_{w,1}^v\cup\{B\}$) define a hyperplane $\Sigma_{w,B}^u$ (resp. $\Sigma_{w,B}^v$). No points of $\D_{w,1}^v\setminus\D_{w,1}^u$ lie in $\Sigma_{w,B}^u$, otherwise $B$ would be $(R-1)$-covered by~$\K_{w-1}$. So, the hyperplanes
 $\Sigma_{w,B}^u$ and $\Sigma_{w,B}^v$ are distinct.  If the corresponding $(R-2)$-subspaces $\Gamma_{w,B}^u=\Sigma_{w,B}^u\cap\Pi_w$ and $\Gamma_{w,B}^v=\Sigma_{w,B}^v\cap\Pi_w$ coincide with each other, then $\Sigma_{w,B}^u$ and $\Sigma_{w,B}^v$ have no common points outside $\Pi_w$, contradiction as $B\notin\Pi_w$. Thus, $\Gamma_{w,B}^u\ne\Gamma_{w,B}^v$.

We have proved that in $\Pi_{w,1}\subset\Pi_w$ we have $\BB_{w,1}$ distinct $(R-2)$-subspaces $\Gamma_{w,B}^j$ in every of which
the $q^{R-2}$-subset $\widehat{\Gamma}_{w,B}^j$ of affine points gives rise to hyperplanes containing~$B$.

 Thus, for the calculation of $\mathbb{S}_w$, the point $B$ will be counted $\#\bigcup\limits_{j=1}^{\BB_{w,1}}\widehat{\Gamma}_{w,B}^j$
  times.
 The same holds for all points of $\U_{w-1}\setminus\Pi_w$. Therefore,
 \begin{equation}\label{eq41:sum}
\mathbb{S}_w=\sum_{P\in\Pi_{w,1}}\delta_{w}(P)= \sum_{B\in\U_{w-1}\setminus\Pi_w}\#\bigcup\limits_{j=1}^{\BB_{w,1}}\widehat{\Gamma}_{w,B}^j.
 \end{equation}

 By \eqref{eq33:delta Alead}, \eqref{eq41:sum}, for $ \delta_{w}(A_{wR+1})$  we have
  \begin{equation}\label{eq41:delta Alead b}
 \delta_{w}(A_{wR+1})\ge
    \frac{\sum\limits_{P\in\Pi_{w,1}}\delta_{w}(P)}{q^{R-2}(q+1)}=
 \frac{\sum\limits_{B\in\U_{w-1}\setminus\Pi_w}\#\bigcup\limits_{j=1}^{\BB_{w,1}}\widehat{\Gamma}_{w,B}^j}{q^{R-2}(q+1)}.
 \end{equation}
 We denote
\begin{equation}\label{eq41:Gmin_def}
\G^{\min}_w\triangleq \min_{B\in\U_{w-1}\setminus\Pi_w}\#\bigcup\limits_{j=1}^{\BB_{w,1}}\widehat{\Gamma}_{w,B}^j.
\end{equation}
By \eqref{eq41:delta Alead b}, \eqref{eq41:Gmin_def},
\begin{equation}\label{eq41:aver3}
 \delta_{w}(A_{wR+1})\ge\frac{\G^{\min}_w\cdot\#(\U_{w-1}\setminus\Pi_w)}{q^{R-2}(q+1)}.
 \end{equation}

\begin{lemma}\label{lem4}
Let  $\BB_{w,1}=\binom{wR}{R-1}\le q+1$. The following holds:
 \begin{equation}\label{eq41:min>=a}
\G^{\min}_w\ge q^{R-3}\BB_{w,1}\left(q+\frac{1}{2}-\frac{1}{2}\BB_{w,1}\right)
=q^{R-3}\binom{wR}{R-1}\left(q+\frac{1}{2}-\frac{1}{2}\binom{wR}{R-1}\right).
   \end{equation}
\end{lemma}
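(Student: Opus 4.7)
The plan is to use Bonferroni's first-order inclusion-exclusion inequality, combined with a careful geometric estimate of the pairwise intersections, applied uniformly for every $B\in\U_{w-1}\setminus\Pi_w$.

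First I would fix an arbitrary $B\in\U_{w-1}\setminus\Pi_w$. Recall from the discussion preceding \eqref{eq41:Gmin_def} that each $\widehat{\Gamma}_{w,B}^j$ is the affine point set of the $(R-2)$-subspace $\Gamma_{w,B}^j\subset\Pi_w$ obtained by removing its $(R-3)$-subspace $\T_{w,1}^j$, so $\#\widehat{\Gamma}_{w,B}^j=q^{R-2}$, and for $j\ne k$ the text has already established that $\Gamma_{w,B}^j\ne\Gamma_{w,B}^k$.

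The key technical step is to show that for all $j\ne k$,
\begin{equation*}
\#(\widehat{\Gamma}_{w,B}^j\cap\widehat{\Gamma}_{w,B}^k)\le q^{R-3}.
\end{equation*}
Since $\Gamma_{w,B}^j$ and $\Gamma_{w,B}^k$ are two distinct $(R-2)$-subspaces of the $(R-1)$-dimensional hyperplane $\Pi_w$, Grassmann's formula gives that $I_{jk}\triangleq\Gamma_{w,B}^j\cap\Gamma_{w,B}^k$ has dimension exactly $R-3$, with $\#I_{jk}=\theta_{R-3,q}$. Because $\widehat{\Gamma}_{w,B}^j\subseteq\Gamma_{w,B}^j$ and $I_{jk}\subseteq\Gamma_{w,B}^j$, one has
\begin{equation*}
\widehat{\Gamma}_{w,B}^j\cap\widehat{\Gamma}_{w,B}^k\subseteq\widehat{\Gamma}_{w,B}^j\cap I_{jk}=I_{jk}\setminus(\T_{w,1}^j\cap I_{jk}).
\end{equation*}
Now both $\T_{w,1}^j$ and $I_{jk}$ are $(R-3)$-dimensional subspaces of the $(R-2)$-dimensional projective space $\Gamma_{w,B}^j$, i.e.\ they are hyperplanes inside $\Gamma_{w,B}^j$. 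Two hyperplanes of a projective space either coincide or meet in a subspace of codimension one, so either $\T_{w,1}^j=I_{jk}$ (in which case $I_{jk}\setminus\T_{w,1}^j=\emptyset$) or $\T_{w,1}^j\cap I_{jk}$ is an $(R-4)$-subspace of size $\theta_{R-4,q}$ (with the convention $\theta_{-1,q}=0$ when $R=3$). In either case,
\begin{equation*}
\#(\widehat{\Gamma}_{w,B}^j\cap\widehat{\Gamma}_{w,B}^k)\le\theta_{R-3,q}-\theta_{R-4,q}=q^{R-3}.
\end{equation*}

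Once this pairwise estimate is in hand, Bonferroni's inequality yields
\begin{equation*}
\#\bigcup_{j=1}^{\BB_{w,1}}\widehat{\Gamma}_{w,B}^j\ge\sum_{j=1}^{\BB_{w,1}}\#\widehat{\Gamma}_{w,B}^j-\sum_{1\le j<k\le\BB_{w,1}}\#(\widehat{\Gamma}_{w,B}^j\cap\widehat{\Gamma}_{w,B}^k)\ge\BB_{w,1}\,q^{R-2}-\binom{\BB_{w,1}}{2}q^{R-3},
\end{equation*}
which after factoring $q^{R-3}\BB_{w,1}$ gives the desired expression $q^{R-3}\BB_{w,1}(q+\tfrac12-\tfrac12\BB_{w,1})$. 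Since the bound holds for every $B\in\U_{w-1}\setminus\Pi_w$, it also holds for the minimum $\G^{\min}_w$ defined in \eqref{eq41:Gmin_def}, and substituting $\BB_{w,1}=\binom{wR}{R-1}$ finishes the proof. The hypothesis $\BB_{w,1}\le q+1$ serves to keep the resulting lower bound positive and non-vacuous.

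The only delicate point is the intersection estimate: the naive containment $\widehat{\Gamma}_{w,B}^j\cap\widehat{\Gamma}_{w,B}^k\subseteq I_{jk}$ alone gives only $\theta_{R-3,q}$, which is too weak to produce the claimed form; the saving by a factor $q^{R-3}/\theta_{R-3,q}$ comes precisely from exploiting that $\T_{w,1}^j$ is itself a hyperplane of $\Gamma_{w,B}^j$ and therefore cuts $I_{jk}$ in codimension at most one.
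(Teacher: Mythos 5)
Your proposal is correct and follows essentially the same strategy as the paper: bound the pairwise intersections of the sets $\widehat{\Gamma}_{w,B}^j$ by $q^{R-3}$, then apply second-order inclusion-exclusion to lower-bound the union. The paper phrases the last step as a ``worst-case'' count (assume every pair of the $n$ sets meets in exactly $q^{R-3}$ points and these intersection points are disjoint within each set, then subtract $\tfrac12 n(n-1)q^{R-3}$), which is exactly the configuration where Bonferroni's inequality is tight; you instead cite Bonferroni directly, which is cleaner and dispenses with the need to argue that the configuration is genuinely extremal. You also give a more detailed justification of the pairwise intersection bound via Grassmann's formula and the observation that $\T_{w,1}^j$ is a hyperplane of $\Gamma_{w,B}^j$, whereas the paper simply states that two distinct $(R-2)$-flats have affine parts meeting in at most $q^{R-3}$ points. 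One small divergence worth noting: the paper uses the hypothesis $\BB_{w,1}\le q+1$ to justify that its worst-case configuration can actually occur (``the considered case is possible if $n-1\le q$''); as you correctly observe, Bonferroni does not need that, and the hypothesis only matters to keep the resulting bound nonnegative. Both routes land on $q^{R-3}\BB_{w,1}\bigl(q+\tfrac12-\tfrac12\BB_{w,1}\bigr)$.
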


\begin{proof}
By \eqref{eq31:mathfracBwi}, \eqref{eq32:suffic}, if $\BB_{w,1}-1\le q$ then, obviously,  $\#\Pi_{w,1}\ge1$.

For some $n$, we consider $n$ of the $q^{R-2}$-sets $\widehat{\Gamma}_{w,B}^j$ of \eqref{eq41:wideGamma}. All the sets are distinct. In fact, if
 $\widehat{\Gamma}_{w,B}^u = \widehat{\Gamma}_{w,B}^v , u\ne v$, then $\widehat{\Gamma}_{w,B}^u \subset  \Gamma_{w,B}^u\cap\Gamma_{w,B}^v$ that implies
 $q^{R-2} =  \#\widehat{\Gamma}_{w,B}^u < \#(\Gamma_{w,B}^u\cap\Gamma_{w,B}^v)=\theta_{R-3,q} $, contradiction.

 As $\widehat{\Gamma}_{w,B}^u $ and $\widehat{\Gamma}_{w,B}^v$ are the affine point sets of distinct $(R-2)$-spaces, they have at most $q^{R-3}$ points in common, i.e. $\#(\widehat{\Gamma}_{w,B}^u  \cap \widehat{\Gamma}_{w,B}^v) \leq q^{R-3}$.

Assume that $\#(\widehat{\Gamma}_{w,B}^u \cap \widehat{\Gamma}_{w,B}^v) = q^{R-3}$, for all pairs $(u,v)$, and that, in every set $\widehat{\Gamma}_{w,B}^j$, all the intersection points are distinct; it is the worst case  for $\#\bigcup\limits_{j=1}^{n}\widehat{\Gamma}_{w,B}^j$.

In every set $\widehat{\Gamma}_{w,B}^j$, the number of the affine point sets  intersecting it is $n-1$ and the number of the intersection points  is $(n-1)q^{R-3}$. As $\#\Gamma_{w,B}^j-(n-1)q^{R-3}=q^{R-2}-(n-1)q^{R-3}$ must be $\ge0$, the considered case is possible if $n-1\le q$.

In all $n$ sets $\widehat{\Gamma}_{w,B}^j$, the total
number of the intersection points  is $n(n-1)q^{R-3}$.  By above,   $\#\bigcup\limits_{j=1}^n\widehat{\Gamma}_{w,B}^j=nq^{R-2}-\frac{1}{2}n(n-1)q^{R-3}$ where $q^{R-2}=\#\widehat{\Gamma}_{w,B}^j$ and we need the factor~$\frac{1}{2}$ in order to calculate the meeting points exactly one time.

Finally, we put $n=\BB_{w,1}=\binom{wR}{R-1}$.
\end{proof}

\subsection{Estimates of sizes of $(R-1)$-saturating sets in $\PG(R,q)$. Upper bound on the length function $\ell_q(R+1,R)$}\label{subsec:estim}
Taking into account that $\Pc_{w,R}$ covers all points of $\Pi_w$, see Corollary \ref{cor32:b}, we have
\begin{equation}\label{eq42:Delta>=delta}
  \Delta_{w}(\Pc_{w,R})\ge\delta_{w}(A_{wR+1})+\#(\U_{w-1}\cap\Pi_w),
\end{equation}
where the  sign ``$\ge$'' is associated with the fact that the inclusion of the points $A_{wR+2},\ldots,A_{wR+R}$ can add new covered points outside $\Pi_w$.

\begin{lemma}
Let $\G^{\min}_w$ be as in \eqref{eq41:Gmin_def}, \eqref{eq41:aver3}
  For the number $\#\U_{w}$ of uncovered points after the $w$-th step of the iterative process, we have
\begin{equation}\label{eq42:Rwp1}
 \#\U_{w}\le q^R\prod_{m=1}^w\left(1- \frac{\G^{\min}_m}{q^{R-2}(q+1)}\right).
\end{equation}
\end{lemma}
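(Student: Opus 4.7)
The plan is to turn the geometric bookkeeping of Sections \ref{subsec:size leader}--\ref{subsec:estim} into a one-step recursive inequality and then iterate it from the base case $w=0$.

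First, I would combine the two estimates already in place. From \eqref{eq42:Delta>=delta} I have
\[
\Delta_w(\Pc_{w,R})\ge \delta_w(A_{wR+1})+\#(\U_{w-1}\cap\Pi_w),
\]
and from \eqref{eq41:aver3} I have
\[
\delta_w(A_{wR+1})\ge \frac{\G^{\min}_w\cdot\#(\U_{w-1}\setminus\Pi_w)}{q^{R-2}(q+1)}.
\]
Writing $\alpha_w\triangleq \G^{\min}_w/(q^{R-2}(q+1))$ and using the identity $\#\U_{w-1}=\#(\U_{w-1}\setminus\Pi_w)+\#(\U_{w-1}\cap\Pi_w)$ together with $\#\U_w=\#\U_{w-1}-\Delta_w(\Pc_{w,R})$, I obtain
\[
\#\U_w\le (1-\alpha_w)\cdot\#(\U_{w-1}\setminus\Pi_w)\le (1-\alpha_w)\cdot\#\U_{w-1}.
\]
This is the key one-step contraction estimate; the contribution $\#(\U_{w-1}\cap\Pi_w)$ cancels exactly, because those points are fully covered by $\Pc_{w,R}$ (Corollary \ref{cor32:b}), while the points outside $\Pi_w$ contract by the factor $1-\alpha_w$ coming from the averaging argument of Section~\ref{subsec:leading}.

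Next I would iterate the recursion from $w$ down to $0$, giving
\[
\#\U_w\le \#\U_0\cdot\prod_{m=1}^w(1-\alpha_m),
\]
and handle the base case separately. Since $\K_0=\Pc_{0,R}$ consists of $R$ points in general position in $\PG(R,q)$, they span a single hyperplane whose $\theta_{R-1,q}$ points are all $(R-1)$-covered by $\K_0$, so
\[
\#\U_0\le \theta_{R,q}-\theta_{R-1,q}=q^R.
\]
Substituting this estimate produces the claimed bound \eqref{eq42:Rwp1}.

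The only subtle point I expect is justifying that the derivation is valid for all $w$ in the range where the iterative process is still running; in particular one needs $\alpha_w\in[0,1]$ and $\#\Pi_{w,1}\ge 1$, which follows from Corollary \ref{cor32:c} together with the hypothesis $\BB_{w,1}\le q+1$ that is used in Lemma \ref{lem4}. Modulo that verification the argument is a clean induction, and essentially all of the geometric work has already been done in Sections \ref{subsec:leading}--\ref{subsec:size leader}; this lemma is the bookkeeping consolidation that will be used in the next section to extract the asymptotic bound on $\ell_q(R+1,R)$.
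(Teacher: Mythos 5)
Your argument is correct and follows essentially the same route as the paper: combine \eqref{eq42:Delta>=delta} and \eqref{eq41:aver3} into a one-step contraction, iterate, and use $\#\U_0=q^R$. The only cosmetic difference is that you cancel the $\#(\U_{w-1}\cap\Pi_w)$ term first and then enlarge $\#(\U_{w-1}\setminus\Pi_w)$ to $\#\U_{w-1}$ (which needs $1-\alpha_w\ge 0$), whereas the paper first upgrades the $\#(\U_{w-1}\cap\Pi_w)$ summand to $\alpha_w\cdot\#(\U_{w-1}\cap\Pi_w)$ using $\alpha_w\le 1$ and collapses directly to $\Delta_w\ge\alpha_w\cdot\#\U_{w-1}$; in both cases $\alpha_w\le 1$ is needed, and the paper obtains it not from the hypothesis $\BB_{w,1}\le q+1$ but from the simple observation that $\alpha_w\cdot\#(\U_{w-1}\setminus\Pi_w)$ is a lower bound for $\delta_w(A_{wR+1})$, itself a count of points inside $\U_{w-1}\setminus\Pi_w$.
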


\begin{proof}
By \eqref{eq31:Delta_def}, \eqref{eq42:Delta>=delta}, \eqref{eq41:aver3}, and Corollary \ref{cor32:b}, we have
 \begin{align*}
 &\Delta_{w}(\Pc_{w,R})=\#\U_{w-1}-\#\U_{w}=\#(\U_{w-1}\setminus\Pi_w)+\#(\U_{w-1}\cap\Pi_w)-\#\U_{w}\db\\
 &\ge\delta_{w}(A_{wR+1})+\#(\U_{w-1}\cap\Pi_w)\ge\frac{\G^{\min}_w\cdot\#(\U_{w-1}\setminus\Pi_w)}{q^{R-2}(q+1)}+\#(\U_{w-1}\cap\Pi_w),
  \end{align*}
 where $\G^{\min}_w\cdot\#(\U_{w-1}\setminus\Pi_w)$ is a lower bound of $\sum\limits_{B\in\U_{w-1}\setminus\Pi_w}\#\bigcup\limits_{j=1}^{\BB_{w,1}}\widehat{\Gamma}_{w,B}^j$, see \eqref{eq41:Gmin_def}, \eqref{eq41:aver3}. Therefore, $(\G^{\min}_w\cdot\#(\U_{w-1}\setminus\Pi_w))/q^{R-2}(q+1)$ is a lower bound of the number of the new covered points in $\U_{w-1}\setminus\Pi_w$. It follows that $\G^{\min}_w/q^{R-2}(q+1)\le1$, as the new covered points in the set $\U_{w-1}\setminus\Pi_w$ are a subset of it, that implies $(\G^{\min}_w\cdot\#(\U_{w-1}\setminus\Pi_w))/q^{R-2}(q+1)\le\#(\U_{w-1}\setminus\Pi_w)$. The summand $\#(\U_w\cap\Pi_w)$ takes into account that $\Pc_{w,R}$ covers all points of $\Pi_w$, see Corollary \ref{cor32:b}.

 As $\G^{\min}_w/q^{R-2}(q+1)\le1$ and $\#\U_{w-1}=\#(\U_{w-1}\setminus\Pi_w)+\#(\U_{w-1}\cap\Pi_w)$, we obtain
 \begin{equation*}
\Delta_{w}(\Pc_{w,R}) \ge\frac{\G^{\min}_w\cdot\#\U_{w-1}}{q^{R-2}(q+1)};
 \end{equation*}
 \begin{equation} \label{eq42:Rwp1_Rw}
    \#\U_{w}\le\#\U_{w-1}-\frac{\G^{\min}_w\cdot\#\U_{w-1}}{q^{R-2}(q+1)}=
    \#\U_{w-1}\left(1- \frac{\G^{\min}_w}{q^{R-2}(q+1)}\right).
 \end{equation}

As $R$ points of $\K_{0}$ are in general position, we have
\begin{equation*}
\#\U_{0}=\theta_{R,q}-\theta_{R-1,q}=q^R.
\end{equation*}
Starting from $\#\U_{0}$ and iteratively applying \eqref{eq42:Rwp1_Rw}, where $w$ is changed by $m$, we obtain the assertion.
\end{proof}

\begin{lemma}\label{lem42:b}
 Let  $\BB_{w,1}=\binom{wR}{R-1}\le q+1$.  The following holds:
\begin{equation}\label{eq42:estim0}
 1- \frac{\G^{\min}_w}{q^{R-2}(q+1)}<\exp\left(\frac{R^{R}}{R!}\left(-\frac{(w-1)^{R-1}}{q+1}+\frac{w^{2R-2}R^{R}}{2q^2\cdot R!}\right)\right).
\end{equation}
 \end{lemma}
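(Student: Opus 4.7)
The plan is to combine the lower bound on $\G^{\min}_w$ from Lemma \ref{lem4} (applicable under the hypothesis $\binom{wR}{R-1}\le q+1$) with the elementary inequality $1-x<e^{-x}$ (valid strictly for $x>0$), and then to rewrite the resulting exponent by replacing the binomial coefficient with monomial bounds of the correct shape in $w$ and $R$.

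From Lemma \ref{lem4} one has
\[
\frac{\G^{\min}_w}{q^{R-2}(q+1)} \ge \frac{\binom{wR}{R-1}\bigl(q+\tfrac{1}{2}-\tfrac{1}{2}\binom{wR}{R-1}\bigr)}{q(q+1)}.
\]
Expanding gives three contributions to $-x$: the principal term $-\binom{wR}{R-1}/(q+1)$, a small \emph{negative} term $-\binom{wR}{R-1}/(2q(q+1))$ coming from the ``$+\tfrac12$'', and a \emph{positive} quadratic term $+\binom{wR}{R-1}^{2}/(2q(q+1))$. Since the middle term is strictly negative (as $\binom{wR}{R-1}\ge R\ge 1$), discarding it and applying $1-x<e^{-x}$ gives
\[
1-\frac{\G^{\min}_w}{q^{R-2}(q+1)} < \exp\!\left(-\frac{\binom{wR}{R-1}}{q+1} + \frac{\binom{wR}{R-1}^{2}}{2q(q+1)}\right).
\]

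To match the stated form, I would now estimate $\binom{wR}{R-1}=wR(wR-1)\cdots(wR-R+2)/(R-1)!$ on both sides, using the fact that this is a product of $R-1$ factors with smallest factor $(w-1)R+2\ge(w-1)R$ and largest factor $wR$. This yields the two-sided estimate
\[
\frac{R^{R}(w-1)^{R-1}}{R!}\;\le\;\binom{wR}{R-1}\;\le\;\frac{w^{R-1}R^{R-1}}{(R-1)!},
\]
the lower form using $R^{R-1}/(R-1)! = R^R/R!$. The lower bound handles the negative principal term. For the positive quadratic term, squaring the upper bound and combining with the identity $((R-1)!)^{-2} = R^{2}(R!)^{-2}$ and the trivial $1/(q(q+1))\le 1/q^{2}$ gives an upper bound of $w^{2R-2}R^{2R}/(2q^{2}(R!)^{2})$. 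Substituting both into the exponent and factoring $R^R/R!$ out reproduces exactly the right-hand side of the claim.

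The main obstacle is purely bookkeeping: one must keep the direction of inequalities straight (\emph{lower} bound on $\binom{wR}{R-1}$ for the negative term and \emph{upper} bound for the positive quadratic term), and spot the two small algebraic manoeuvres $((R-1)!)^{-2}=R^{2}(R!)^{-2}$ and $q(q+1)\ge q^{2}$ which together absorb the factor $R^2$ and the $q$-versus-$q+1$ mismatch, converting the natural $(R-1)!$-bound into the stated $R!$-bound.
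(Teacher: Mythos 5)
Your proposal is correct and follows essentially the same route as the paper's proof: the paper keeps the coefficient $\tfrac{2q+1}{2q(q+1)}$ together and implicitly uses $\tfrac{2q+1}{2q(q+1)}>\tfrac{1}{q+1}$, which is exactly your step of discarding the negative piece $\binom{wR}{R-1}/(2q(q+1))$. The remaining ingredients — the monomial bounds $\tfrac{((w-1)R)^{R-1}}{(R-1)!}\le\binom{wR}{R-1}\le\tfrac{(wR)^{R-1}}{(R-1)!}$ via smallest factor $(w-1)R+2\ge(w-1)R$ and largest factor $wR$, the absorption $1/(q(q+1))\le 1/q^2$, and the rewrite $((R-1)!)^{-2}=R^2(R!)^{-2}$ to factor out $R^R/R!$ — appear in the same way in the paper.
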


 \begin{proof}
 By the inequality $1-x\le\exp(-x)$ and by \eqref{eq41:min>=a}, \eqref{eq31:mathfracBwi}, we have
\begin{align*}
 &  1- \frac{\G^{\min}_w}{q^{R-2}(q+1)}<\exp\left(-\frac{\G^{\min}_w}{q^{R-2}(q+1)}\right)\db\\
& <\exp\left(-\binom{wR}{R-1}\left(2q+1-\binom{wR}{R-1}\right)\frac{1}{2q(q+1)}\right)\db\\
& =\exp\left(-\binom{wR}{R-1}\frac{2q+1}{2q(q+1)}+\binom{wR}{R-1}^2\frac{1}{2q(q+1)}\right)\db\\
& <\exp\left(-\frac{(wR-R+2)^{R-1}}{(R-1)!\cdot(q+1)}+\left(\frac{(wR)^{R-1}}{(R-1)!}\right)^2\frac{1}{2q^2}\right)\db\\
& <\exp\left(-\frac{((w-1)R)^{R-1}}{(R-1)!\cdot(q+1)}+\frac{(wR)^{2R-2}}{2q^2((R-1)!)^2}\right)
\end{align*}
that implies the assertion.
 \end{proof}
Let $B_{2j}$ be a Bernoulli number \cite[Section~1.3]{HandbookMathForm}. We denote
\begin{align}\label{eq42:fwq def}
 &f_w(q,R)\triangleq\prod_{m=1}^w\left(1- \frac{\G^{\min}_m}{q^{R-2}(q+1)}\right),\db\\
 &\Dk_{w}^-(q,R)\triangleq\frac{(w-1)^R}{R(q+1)}+\frac{(w-1)^{R-1}}{2(q+1)}+
  \sum_{j=1}^{\left\lceil\frac{R-2}{2}\right\rceil}\frac{B_{2j}}{2j}\binom{R-1}{2j-1}\frac{(w-1)^{R-2j}}{q+1},\label{eq42:D- def}\db\\
  &\Dk_w^+(q,R)\triangleq \frac{w^{2R-1}}{2(2R-1)q^2}+\frac{w^{2R-2}}{4q^2}
  +\sum_{j=1}^{R-1}\frac{B_{2j}}{2j}\binom{2R-2}{2j-1}\frac{w^{2R-2j-1}}{2q^2}\label{eq42:D+ def}.
\end{align}
\begin{corollary}\label{cor42:}
  Let  $\BB_{w,1}=\binom{wR}{R-1}\le q+1$. We have
  \begin{align}\label{eq42:prod}
   & f_w(q,R)<\exp\left(\frac{R^{R}}{R!}\left(-\Dk_w^-(q,R)
 +\frac{R^{R}}{R!}\Dk_w^+(q,R)\right)\right).
  \end{align}
\end{corollary}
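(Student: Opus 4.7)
The plan is to pass to logarithms, which turns the product $f_w(q,R)$ into a sum, apply Lemma~\ref{lem42:b} termwise, and then recognize the two resulting power sums as classical Bernoulli-number sums.

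Since the hypothesis $\binom{wR}{R-1}\le q+1$ automatically gives $\binom{mR}{R-1}\le q+1$ for every $m=1,\ldots,w$, Lemma~\ref{lem42:b} applies to each of the $w$ factors in the definition \eqref{eq42:fwq def} of $f_w(q,R)$. Taking logarithms, summing over $m$, and factoring out the common prefactor $R^R/R!$ yields
\begin{equation*}
\log f_w(q,R)<\frac{R^R}{R!}\left(-\frac{1}{q+1}\sum_{m=1}^{w}(m-1)^{R-1}+\frac{R^R}{2\cdot R!\cdot q^2}\sum_{m=1}^{w}m^{2R-2}\right).
\end{equation*}

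Next, I would invoke Faulhaber's formula in the standard form
\begin{equation*}
\sum_{k=1}^{n}k^{p}=\frac{n^{p+1}}{p+1}+\frac{n^{p}}{2}+\sum_{j=1}^{\lfloor p/2\rfloor}\frac{B_{2j}}{2j}\binom{p}{2j-1}n^{p-2j+1},
\end{equation*}
applied with $(p,n)=(R-1,w-1)$ to the first sum (using the reindexing $\sum_{m=1}^{w}(m-1)^{R-1}=\sum_{k=1}^{w-1}k^{R-1}$) and with $(p,n)=(2R-2,w)$ to the second. Dividing the first expansion by $q+1$ and using the elementary identity $\lfloor(R-1)/2\rfloor=\lceil(R-2)/2\rceil$ reproduces exactly the definition of $\Dk_w^-(q,R)$ in \eqref{eq42:D- def}; dividing the second expansion by $2q^2$ reproduces $\Dk_w^+(q,R)$ in \eqref{eq42:D+ def} term by term. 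Substituting these identifications back into the log-bound and exponentiating gives the stated inequality.

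There is no deep obstacle beyond careful bookkeeping: matching the exponents of $w-1$ and $w$, the factor $1/2$ on the penultimate terms $(w-1)^{R-1}/2$ and $w^{2R-2}/2$, and the summation ranges of the Bernoulli tails. The corollary is an algebraic rewriting of Lemma~\ref{lem42:b} once the product is converted into the exponential of a sum; all of the combinatorial content has already been extracted in the passage from $\G^{\min}_w$ to the per-step exponential bound.
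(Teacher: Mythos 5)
Your proof is correct and follows the same route as the paper: apply Lemma~\ref{lem42:b} to each factor of the product in \eqref{eq42:fwq def} (noting $\binom{mR}{R-1}\le\binom{wR}{R-1}\le q+1$ for $m\le w$), convert the resulting product of exponentials to the exponential of a sum, and then evaluate the two power sums via Faulhaber's formula to recover $\Dk_w^-$ and $\Dk_w^+$. The paper compresses the Faulhaber step to a citation of the handbook; you simply carry it out explicitly, including the correct identification $\lfloor(R-1)/2\rfloor=\lceil(R-2)/2\rceil$ for the summation range.
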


\begin{proof}
  By Lemma \ref{lem42:b} and \eqref{eq42:fwq def},
  \begin{align*}
    &f_w(q,R)<\prod_{m=1}^w\exp\left(\frac{R^{R}}{R!}\left(-\frac{(m-1)^{R-1}}{q+1}+\frac{m^{2R-2}R^{R}}{2q^2\cdot R!}\right)\right)\db\\
    &=\exp\left(\frac{R^{R}}{R!}\left(-\frac{1}{q+1}\sum_{u=1}^{w-1} u^{R-1}+\frac{R^R}{2q^2\cdot R!}\sum_{m=1}^w m^{2R-2}\right)\right).
     \end{align*}
     Then we use  \cite[Sections 1.2, 1.3]{HandbookMathForm}.
\end{proof}

Note that $B_2=1/6, B_4=B_8=-1/30, B_6=1/42,\ldots$

Corollary \ref{cor42:} allows to obtain exact relations for small $R$ and, also, asymptotic bounds for any $R$ when  $q$ tends to infinity.

\begin{lemma}\label{lem42:w suffic}
 Let $\BB_{w,1}=\binom{wR}{R-1}\le q+1$. To provide $\#\U_{w}\le q^Rf_q(w,R)\le R$, it is sufficient to take $w$ satisfying  the inequality
  \begin{equation}
  \Dk_w^-(q,R)-
 \frac{R^{R}}{R!}\Dk_w^+(q,R)  \ge \frac{R!}{R^{R-1}}\ln q. \label{eq42:w suffic}
\end{equation}
\end{lemma}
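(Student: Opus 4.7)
The plan is to convert the desired inequality $\#\U_w\le R$ into a condition on $w,q,R$ by applying, in sequence, the previous lemma that bounded $\#\U_w$ by $q^R f_w(q,R)$ and then Corollary \ref{cor42:} that bounded $f_w(q,R)$ by an explicit exponential. No new combinatorics is needed; the argument is purely algebraic manipulation of the two preceding estimates.

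First, using the bound $\#\U_w\le q^R f_w(q,R)$ from \eqref{eq42:Rwp1} and \eqref{eq42:fwq def}, the requirement $\#\U_w\le R$ is implied by $q^R f_w(q,R)\le R$, which, after taking logarithms, is equivalent to
\begin{equation*}
\ln f_w(q,R)\le \ln R - R\ln q.
\end{equation*}
Next, by Corollary \ref{cor42:}, under the standing hypothesis $\BB_{w,1}=\binom{wR}{R-1}\le q+1$, we have
\begin{equation*}
\ln f_w(q,R)<\frac{R^R}{R!}\Bigl(-\Dk_w^-(q,R)+\frac{R^R}{R!}\Dk_w^+(q,R)\Bigr).
\end{equation*}
Therefore a sufficient condition is
\begin{equation*}
\frac{R^R}{R!}\Bigl(\Dk_w^-(q,R)-\frac{R^R}{R!}\Dk_w^+(q,R)\Bigr)\ge R\ln q-\ln R.
\end{equation*}

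Dividing both sides by $R^R/R!$ and noting that $R^R/R!=R\cdot R^{R-1}/R!$, this becomes
\begin{equation*}
\Dk_w^-(q,R)-\frac{R^R}{R!}\Dk_w^+(q,R)\ge \frac{R!}{R^{R-1}}\ln q-\frac{R!}{R^R}\ln R.
\end{equation*}
Since $\ln R\ge 0$ for $R\ge 1$, the right-hand side is strictly less than $\frac{R!}{R^{R-1}}\ln q$, so the inequality \eqref{eq42:w suffic} in the statement of the lemma is a stronger (and hence sufficient) condition. This completes the argument.

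I do not anticipate a genuine obstacle: the proof is essentially a one-line chain of implications chaining Corollary \ref{cor42:} with the elementary estimate $1-x\le\exp(-x)$ already absorbed into $f_w(q,R)$. The only minor point to be careful about is discarding the $-\frac{R!}{R^R}\ln R$ term to obtain the clean bound displayed in \eqref{eq42:w suffic}, which is legitimate precisely because the discarded quantity is nonnegative.
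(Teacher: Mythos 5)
Your proof is correct and follows essentially the same route as the paper's: chain the bound $\#\U_w\le q^R f_w(q,R)$ with Corollary \ref{cor42:}, take logarithms, and rearrange. The only cosmetic difference is that the paper targets $q^R f_w(q,R)\le 1$ at the outset (which eliminates the $\ln R$ term immediately), while you target $\le R$ and discard the resulting $-\frac{R!}{R^R}\ln R$ term at the end; the two are trivially equivalent.
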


\begin{proof}
  If $q^Rf_q(w,R)\le 1$ then $q^Rf_q(w,R)\le R$. We
  take the logarithm of both the parts of the inequality $q^Rf_q(w,R)\le 1$ and use \eqref{eq42:prod}.
  \end{proof}

We will find the solution of the inequality \eqref{eq42:w suffic} in the form $w=\lceil \sqrt[R]{kq\ln q}\,\rceil$, where $k>0$  is a constant independent of $q$. For the convenience of research we write $w=\sqrt[R]{k q\ln q}+1$. The terms $\Dk_w^\mp(q,R)$  of \eqref{eq42:D- def} and \eqref{eq42:D+ def} take the form:
\begin{align}
 &\Dk_{w}^-(q,R)=\frac{kq\ln q}{R(q+1)}+\frac{(kq\ln q)^{1-1/R}}{2(q+1)}+
  \sum_{j=1}^{\left\lceil\frac{R-2}{2}\right\rceil}\frac{B_{2j}}{2j}\binom{R-1}{2j-1}\frac{(kq\ln q)^{1-2j/R}}{q+1},\notag\db\\
  &\Dk_w^+(q,R)< \frac{((k+1) q\ln q)^{2-1/R}}{2(2R-1)q^2}+\frac{((k+1) q\ln q)^{2-2/R}}{4q^2}\notag\db\\
  &+\sum_{j=1}^{R-1}\frac{B_{2j}}{2j}\binom{2R-2}{2j-1}\frac{((k+1) q\ln q)^{2-(2j-1)/R}}{2q^2},\label{eq42:D+}
\end{align}
that implies
\begin{align}\label{eq42:D q inf}
&\lim_{q\rightarrow\infty} \Dk_{w}^-(q,R)-\frac{kq\ln q}{R(q+1)}=0,~ \lim_{q\rightarrow\infty} \frac{q}{q+1}=1,~\lim_{q\rightarrow\infty} \Dk_{w}^-(q,R)
=\lim_{q\rightarrow\infty}\frac{k\ln q}{R}
;\db\\
&\lim_{q\rightarrow\infty} \Dk_{w}^+(q,R)=0.\label{eq42:D+ q inf}
\end{align}

\begin{lemma}\label{lem42:w>=}
Let $w= \sqrt[R]{kq\ln q}+1$, where $k>0$  is a constant independent of $q$. Then, for $q$ large enough, to provide $\#\U_{w}\le q^Rf_q(w,R)\le R$, it is sufficient to take $w$ satisfying  the inequality
  \begin{equation}
  w\ge\sqrt[R]{\frac{R!}{R^{R-2}}}\cdot\sqrt[R]{q\ln q}+1. \label{eq42:w suffic2}
\end{equation}
\end{lemma}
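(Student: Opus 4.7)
The plan is to apply Lemma \ref{lem42:w suffic}: it suffices to verify that the chosen value of $w$ satisfies inequality \eqref{eq42:w suffic}. Substitute $w = \sqrt[R]{kq\ln q}+1$ into the expressions \eqref{eq42:D- def} and \eqref{eq42:D+} for $\Dk_w^-(q,R)$ and $\Dk_w^+(q,R)$. The leading behavior of each is already extracted in \eqref{eq42:D q inf} and \eqref{eq42:D+ q inf}: the dominant contribution to $\Dk_w^-(q,R)$ comes from $\frac{kq\ln q}{R(q+1)}$, which tends to $\frac{k \ln q}{R}$, while $\Dk_w^+(q,R)\to 0$. Hence, for $q$ large enough,
\begin{equation*}
\Dk_w^-(q,R) - \frac{R^R}{R!}\Dk_w^+(q,R) = \frac{k\ln q}{R}\bigl(1 + o(1)\bigr).
\end{equation*}

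The sufficient condition \eqref{eq42:w suffic} then reads $\frac{k\ln q}{R}(1+o(1)) \ge \frac{R!}{R^{R-1}}\ln q$, which holds asymptotically whenever $k \ge \frac{R!}{R^{R-2}}$. Taking the threshold value $k = R!/R^{R-2}$ gives precisely
\begin{equation*}
w = \sqrt[R]{\tfrac{R!}{R^{R-2}}\,q\ln q}+1 = \sqrt[R]{\tfrac{R!}{R^{R-2}}}\cdot\sqrt[R]{q\ln q}+1,
\end{equation*}
which is the claimed bound \eqref{eq42:w suffic2}. Any larger $w$ satisfying \eqref{eq42:w suffic2} only makes the dominant term $\frac{kq\ln q}{R(q+1)}$ of $\Dk_w^-(q,R)$ larger (with lower-order terms remaining negligible), so \eqref{eq42:w suffic} continues to hold.

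The main obstacle is controlling the lower-order terms rigorously. Specifically, one must check that every subleading summand in \eqref{eq42:D- def} is $o(\ln q)$ and that $\frac{R^R}{R!}\Dk_w^+(q,R) = o(\ln q)$. After the substitution these summands take the form displayed in the block between \eqref{eq42:D+} and \eqref{eq42:D q inf}, with exponents of $q$ strictly less than $1$ when $R \ge 3$, so all error terms are in fact $o(1)$ and the limits \eqref{eq42:D q inf}, \eqref{eq42:D+ q inf} apply. A side check is that the hypothesis $\BB_{w,1} = \binom{wR}{R-1} \le q+1$ of Lemma \ref{lem42:w suffic} is satisfied: for $w = \Theta(\sqrt[R]{q\ln q})$ one has $\binom{wR}{R-1} = \Theta\bigl((q\ln q)^{(R-1)/R}\bigr) = o(q)$, so this condition is automatic for $q$ large enough. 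Combining these verifications with Lemma \ref{lem42:w suffic} yields the assertion.
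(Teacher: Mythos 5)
Your proposal is correct and follows essentially the same route as the paper's own proof: both rely on Lemma \ref{lem42:w suffic}, substitute $w=\sqrt[R]{kq\ln q}+1$ into $\Dk_w^{-}(q,R)$ and $\Dk_w^{+}(q,R)$, invoke the limits \eqref{eq42:D q inf}--\eqref{eq42:D+ q inf} to isolate the dominant term $k\ln q/R$, and reduce \eqref{eq42:w suffic} to $k\ge R!/R^{R-2}$; and both verify (you as a ``side check,'' the paper at the outset) that $\BB_{w,1}=\binom{wR}{R-1}<q+1$ holds for $q$ large enough because its growth is of order $(q\ln q)^{(R-1)/R}=o(q)$. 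The only cosmetic difference is the order in which the hypothesis check and the asymptotic simplification appear.
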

\begin{proof}
  From \eqref{eq31:mathfracBwi}
   \begin{equation*}
     \BB_{w,1}=\binom{R\sqrt[R]{kq\ln q}+R}{R-1}.
   \end{equation*}
    Recall that
     \begin{equation*}
     \binom{n}{k}= \frac{n \times (n-1) \times \dots \times (n-(k-1))}{k!}.
   \end{equation*}
   The numerator of such a fraction has $k$ factors. Then, for  $R$ fixed and $q$ large enough,
   \begin{equation*}
     \BB_{w,1}=\binom{R\sqrt[R]{kq\ln q}+R}{R-1} <  \binom{2R\sqrt[R]{kq\ln q}}{R-1} <  \frac{2^{R-1}R^{R-1}(kq\ln q)^{\frac{R-1}{R}}}{(R-1)!} < q+1
   \end{equation*}
as $k$ is a constant and the exponent of $q\ln q$ is less than 1.

Also, for $q$ large enough, \eqref{eq42:w suffic}, \eqref{eq42:D q inf}, and \eqref{eq42:D+ q inf} imply
   \begin{equation*}
     \frac{k\ln q}{R}\ge \frac{R!}{R^{R-1}}\ln q,~k  \ge \frac{R!}{R^{R-2}}.\qedhere
   \end{equation*}
\end{proof}
By Stirling's approximation of $R!$ \cite[Section 11.1.3.1]{HandbookMathForm}, \cite{Stirling}, we have
\begin{align*}
&\sqrt{2\pi R}\left(\frac{R}{e}\right)^R<\sqrt{2\pi R}\left(\frac{R}{e}\right)^R \cdot\sqrt[12R+1]{e}< R!<\sqrt{2\pi R}\left(\frac{R}{e}\right)^R\cdot \sqrt[12R]{e};\db\\
&R!\thickapprox\sqrt{2\pi R}\left(\frac{R}{e}\right)^R,~R\text{ is large enough}.
\end{align*}
After simple transformations this implies
\begin{align}\label{eq5:le cnew le}
&\frac{1}{e}<\frac{1}{e}\sqrt[2R]{2\pi R^5}< \sqrt[R]{\frac{R!}{R^{R-2}}}<\frac{1}{e}\sqrt[2R]{2\pi R^5} \cdot\sqrt[12R^2]{e};\db\\
&\sqrt[R]{\frac{R!}{R^{R-2}}}\thickapprox\frac{1}{e}\sqrt[2R]{2\pi R^5},~\frac{1}{R}\sqrt[R]{\frac{R!}{R^{R-2}}}\thickapprox\frac{1}{eR}\sqrt[2R]{2\pi R^5},~R\text{ is large enough}.\label{eq5:approximRbig}
\end{align}

\begin{lemma}\label{lem_lim_cnew}
\begin{equation}\label{eq1:lim cnew}
 \lim_{R\rightarrow\infty} c^{new}_{R}=\lim_{R\rightarrow\infty}\sqrt[R]{\frac{R!}{R^{R-2}}}=\frac{1}{e}\thickapprox0.3679.
\end{equation}
\end{lemma}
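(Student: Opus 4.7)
The plan is to derive the limit directly from the sandwich inequalities \eqref{eq5:le cnew le} that have already been established in the excerpt via Stirling's approximation. Those inequalities give, for every $R$,
\begin{equation*}
\frac{1}{e} \;<\; \frac{1}{e}\sqrt[2R]{2\pi R^5} \;<\; \sqrt[R]{\frac{R!}{R^{R-2}}} \;<\; \frac{1}{e}\sqrt[2R]{2\pi R^5}\cdot\sqrt[12R^2]{e},
\end{equation*}
so the whole proof reduces to verifying that both of the outer expressions tend to $1/e$ as $R\to\infty$; then the squeeze theorem delivers the claim.

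To handle the upper bound, I would write $\sqrt[2R]{2\pi R^5}=\exp\!\bigl(\tfrac{1}{2R}\ln(2\pi R^5)\bigr)=\exp\!\bigl(\tfrac{\ln(2\pi)+5\ln R}{2R}\bigr)$, and note that $\tfrac{\ln(2\pi)+5\ln R}{2R}\to 0$ as $R\to\infty$, so $\sqrt[2R]{2\pi R^5}\to 1$. Similarly, $\sqrt[12R^2]{e}=\exp\!\bigl(\tfrac{1}{12R^2}\bigr)\to 1$. Consequently the upper bound $\tfrac{1}{e}\sqrt[2R]{2\pi R^5}\cdot\sqrt[12R^2]{e}$ tends to $\tfrac{1}{e}\cdot 1\cdot 1=\tfrac{1}{e}$. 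The lower bound $\tfrac{1}{e}\sqrt[2R]{2\pi R^5}$ tends to $\tfrac{1}{e}$ by the same computation.

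Applying the squeeze theorem to the two-sided bound then yields
\begin{equation*}
\lim_{R\to\infty}\sqrt[R]{\frac{R!}{R^{R-2}}}=\frac{1}{e},
\end{equation*}
which is precisely the assertion $\lim_{R\to\infty} c^{new}_R = 1/e$. There is no real obstacle here: the nontrivial content is already embedded in the Stirling-based inequalities \eqref{eq5:le cnew le}, and the remainder is an elementary limit calculation. The only thing to be careful about is ensuring the two elementary limits $(2\pi R^5)^{1/(2R)}\to 1$ and $e^{1/(12R^2)}\to 1$ are stated cleanly (via the exponential-of-logarithm trick above) before invoking the squeeze.
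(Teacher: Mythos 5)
Your proof is correct and follows essentially the same route as the paper: it applies the squeeze theorem to the two-sided Stirling-based bound \eqref{eq5:le cnew le}, using the exponential-of-logarithm trick to see that $\sqrt[2R]{2\pi R^5}\to 1$ and $\sqrt[12R^2]{e}\to 1$. The only difference is that you spell out both outer limits explicitly, whereas the paper's proof is terser and records only $\lim_{R\to\infty}\sqrt[2R]{R^5}=1$, leaving the remaining (trivial) factors implicit.
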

\begin{proof}
We use \eqref{eq5:le cnew le} and the fact that
\begin{equation*}
\lim_{R\rightarrow\infty} \sqrt[2R]{R^5}=\lim_{R\rightarrow\infty} e ^{\frac{5}{2R} \ln R } =1.
\end{equation*}

\end{proof}

\begin{theorem}\label{th32:bound}
Let $R\ge3$ be fixed. For the smallest size $s_q(R,R-1)$ of an $(R-1)$-saturating set in  $\PG(R,q)$ and  for the smallest
length of a $ q $-ary linear code with codimension (redundancy) $R+1$ and covering radius $R$ (i.e. for the length function $\ell_q(R+1,R)$)
the following asymptotic upper bounds hold:
  \begin{align}\label{eq32:bound}
   &\bullet~s_q(R,R-1) =\ell_q(R+1,R)\le\sqrt[R]{\frac{R!}{R^{R-2}}}\cdot q^{1/R}\cdot\sqrt[R]{\ln q}+1+R\db\\
    &=\sqrt[R]{\frac{R!}{R^{R-2}}}\cdot q^{1/R}\cdot\sqrt[R]{\ln q}+o(q),~q \text{ is large enough},~q\text{ is an arbitrary prime power};\notag\db\\
   &\bullet~\text{ if additionally }R\text{ is large enough, then in \eqref{eq32:bound} we have} \sqrt[R]{\frac{R!}{R^{R-2}}}\thicksim\frac{1}{e}\thickapprox0.3679.\label{eq32:boundbigR}
      \end{align}
\end{theorem}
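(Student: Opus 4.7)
The plan is to apply Construction B from Section \ref{sec:constrB} and invoke the quantitative estimates of Section \ref{sec:est_size} to bound the size of the resulting saturating set. I would run the iterative process of Section \ref{subsec:iterproc} up to a step index $w$ chosen just large enough to force $\#\U_w\le R$, and then adjoin those at most $R$ remaining uncovered points directly in a final cleanup step, obtaining a full $(R-1)$-saturating set $\K\subset\PG(R,q)$.

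The technical core is already packaged as Lemma \ref{lem42:w>=}: for $q$ large enough, any $w\ge \sqrt[R]{R!/R^{R-2}}\cdot\sqrt[R]{q\ln q}+1$ is sufficient for $\#\U_w\le R$. Corollary \ref{cor32:c} guarantees that the iterative process is feasible throughout this range of $w$, since the hypothesis $\BB_{w,1}=\binom{wR}{R-1}\le q+1$ holds for $q$ large enough, and Corollaries \ref{cor32:a}--\ref{cor32:b} ensure that $\K_w$ has the ``any $R$ points in general position'' property. The latter forces the minimum distance of the associated code to be at least $R+1$, which yields the AMDS claim. Summing $\#\K_w$ with the cleanup contribution and substituting the value of $w$ produced by Lemma \ref{lem42:w>=} gives the bound in \eqref{eq32:bound}; the length-function version then follows from the one-to-one correspondence \eqref{eq1:s=ell}. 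The large-$R$ refinement \eqref{eq32:boundbigR} reduces to Lemma \ref{lem_lim_cnew}, which uses the two-sided Stirling envelope \eqref{eq5:le cnew le}.

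The main obstacle, resolved upstream in Lemma \ref{lem4}, is the lower bound $\G^{\min}_w\ge q^{R-3}\BB_{w,1}(q+\tfrac12-\tfrac12\BB_{w,1})$: the $\BB_{w,1}$ distinct $(R-2)$-affine sets $\widehat{\Gamma}_{w,B}^j\subset\Pi_w$ associated with a fixed uncovered point $B\in\U_{w-1}\setminus\Pi_w$ can overlap substantially, and bounding $\#\bigcup_j\widehat{\Gamma}_{w,B}^j$ from below requires the worst-case assumption that every pair of them meets in a full $(R-3)$-subspace with all intersection points distinct. The quadratic-in-$\BB_{w,1}$ correction this produces must be balanced against the linear-in-$\BB_{w,1}$ gain when choosing $w$, and after the asymptotic simplifications \eqref{eq42:D q inf}--\eqref{eq42:D+ q inf} this balance is precisely what forces the optimal constant $R!/R^{R-2}$ in the final bound.
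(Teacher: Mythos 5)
Your plan follows the paper's own (very terse) proof of Theorem \ref{th32:bound} step for step: run Construction B until the step index $w$ supplied by Lemma \ref{lem42:w>=} makes $\#\U_w\le R$, adjoin the $\le R$ remaining uncovered points, and pass to codes via \eqref{eq1:s=ell}; for the large-$R$ refinement \eqref{eq32:boundbigR}, invoke Lemma \ref{lem_lim_cnew}. So there is no divergence of method.

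There is, however, a genuine arithmetic gap at the one place you wave your hands, namely where you assert that ``summing $\#\K_w$ with the cleanup contribution and substituting the value of $w$ produced by Lemma \ref{lem42:w>=} gives the bound in \eqref{eq32:bound}.'' By \eqref{eq31:Kw}, Construction B adds \emph{$R$ points per step}, so $\#\K_w=(w+1)R$, and the resulting saturating set has size at most $(w+1)R+R$. Substituting $w\approx\sqrt[R]{R!/R^{R-2}}\cdot\sqrt[R]{q\ln q}$ from Lemma \ref{lem42:w>=} therefore yields a leading term $R\cdot\sqrt[R]{R!/R^{R-2}}\cdot q^{1/R}\sqrt[R]{\ln q}=\sqrt[R]{R^2R!}\cdot q^{1/R}\sqrt[R]{\ln q}$, which is a factor $R$ larger than the leading term stated in \eqref{eq32:bound}. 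The secondary term ``$+1+R$'' in \eqref{eq32:bound} is exactly what one would get from identifying the set size with $w+1+R$ rather than with $(w+1)R+R$, which is presumably the source of the discrepancy. The paper's own proof (``use \eqref{eq1:s=ell}, Lemma \ref{lem42:w>=}, and add $R$ points'') contains the same elision, so you have reproduced the paper faithfully; but a proof written out in full cannot simply assert that the substitution produces \eqref{eq32:bound}. You should either exhibit the algebra showing where the factor $R$ disappears, or note that the constant produced by this argument is $R\sqrt[R]{R!/R^{R-2}}$ and flag the mismatch with the theorem statement.
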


\begin{proof}
  For \eqref{eq32:bound}, we use \eqref{eq1:s=ell}, Lemma \ref{lem42:w>=}, and add $R$ points to account for the last action of the iterative process, see Section \ref{subsec:iterproc}. For \eqref{eq32:boundbigR} we apply \eqref{eq1:lim cnew}.
\end{proof}
Note that for $r=R+1$, we have $(r-R)/R=1/R$, cf. the bounds \eqref{eq1:upbound}, \eqref{eq1:asympupbound}.

By construction we have the theorem:
\begin{theorem}
  The $(R-1)$-saturating $n$-set obtained by Construction B corresponds to an AMDS $[n,n-(R+1),R+1]_qR$ code with minimum distance $d=R+1$.
\end{theorem}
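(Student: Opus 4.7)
The theorem splits into identifying the code parameters and establishing $d=R+1$.

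For the code parameters, I would invoke the one-to-one correspondence \eqref{eq1:s=ell} between $(R-1)$-saturating $n$-sets of $\PG(R,q)$ and parity-check matrices of $[n,n-(R+1)]_qR$ codes, which immediately gives length $n$, codimension $r=R+1$, and covering radius $R$. For the lower bound $d\ge R+1$, I would appeal directly to Corollary \ref{cor32:a}: throughout the iterative process any $R$ points of the current set $\K_w$ are in general position in $\PG(R,q)$, so any $R$ columns of the parity-check matrix are linearly independent and no non-zero codeword has Hamming weight $\le R$. The final cleanup step of Section \ref{subsec:iterproc}, which inserts at most $R$ remaining uncovered points, preserves this invariant since the positions forbidden to a new insertion form a union of proper subspaces of total size $O(q^{R-1})$, leaving uncovered points outside whenever $q$ is large enough.

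For the upper bound $d\le R+1$, I would exhibit $R+1$ code points lying in a common hyperplane. By \eqref{eq31:Pcwi subs Piw} each $R$-set $\Pc_{w,R}$ with $w\ge 1$ is entirely contained in the hyperplane $\Pi_w$, so it suffices to place one further code point inside some $\Pi_w$---equivalently, to put some point of $\Pc_{w,R}$ into the hyperplane $H_0=\langle\Pc_{0,R}\rangle$ spanned by the starting set. In the first iterative step the intersection $L=\Pi_1\cap H_0$ is an $(R-2)$-subspace with $\theta_{R-2,q}$ points, while the excluded set $\mathfrak T_{1,i}$ in each sub-step is a union of $\binom{R+i-1}{R-1}$ subspaces of dimension at most $R-3$, meeting $L$ in at most $O(q^{R-3})$ points. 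Hence $L\cap\Pi_{1,i}\ne\emptyset$ for $q$ large enough and each relevant $i$. Since the construction of Section \ref{subsec:PcwR} permits the non-leading points $A_{R+2},\ldots,A_{2R}$ to be chosen freely inside $\Pi_{1,i}$ subject only to Corollary \ref{cor32:a}, I would simply pick one of them, say $A_{R+2}$, inside $L\subset H_0$. Then $\Pc_{0,R}\cup\{A_{R+2}\}$ is a set of $R+1$ code points lying in the single hyperplane $H_0$, providing the required linear dependence and hence $d\le R+1$. Combining both bounds yields $d=R+1$, so the code is AMDS.

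The main technical subtlety is in the $d\le R+1$ step: one must verify that forcing $A_{R+2}\in L$ does not damage the coverage estimates of Section \ref{sec:est_size}. Since Corollary \ref{cor32:b} guarantees that $\Pc_{w,R}$ covers all of $\Pi_w$ regardless of the specific choice of its constituent points (provided general position is maintained), and since the quantitative estimate of $\G^{\min}_w$ uses only the leading point $A_{wR+1}$, this cosmetic modification leaves the upper bound on $n$ unaffected. The remaining $d\ge R+1$ direction and the parameter identification are routine consequences of Corollary \ref{cor32:a} and \eqref{eq1:s=ell}.
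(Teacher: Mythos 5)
The paper itself gives no proof beyond the phrase ``by construction,'' so you had to supply the entire argument, and your decomposition into $d\ge R+1$ (general position via Corollary~\ref{cor32:a}) and $d\le R+1$ (exhibiting $R+1$ points in a common hyperplane) is the right one. Your $d\le R+1$ argument via placing a non-leading point of $\Pc_{1,R}$ in $L=\Pi_1\cap\langle\Pc_{0,R}\rangle$ is a legitimate and needed addition: the algorithm as written never explicitly rules out the output being an arc, so a deliberate choice within the allowed freedom is required, and you correctly verify that this choice is compatible with the coverage estimates, since only the leading point $A_{wR+1}$ enters $\G^{\min}_w$.

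However, your justification that the final cleanup step preserves the general-position invariant (hence $d\ge R+1$) does not hold up. You argue that the forbidden positions ``form a union of proper subspaces of total size $O(q^{R-1})$, leaving uncovered points outside whenever $q$ is large enough.'' But the uncovered points are not free parameters we get to pick away from a small bad set---they are forced on us by $\K_w$, and ``the bad set is small'' says nothing about where those forced points sit. The correct reason a single uncovered point $P$ avoids the bad set is structural: if $P$ lay in the $(R-2)$-subspace $\langle A_{i_1},\ldots,A_{i_{R-1}}\rangle$ spanned by $R-1$ points of $\K_w$, then adjoining a further point $A_{i_R}\in\K_w$ outside that subspace (which exists since $\#\K_w\ge R$) would put $P$ into the hyperplane $\langle A_{i_1},\ldots,A_{i_R}\rangle$ with $R$ points of $\K_w$ in general position, contradicting $P\in\U_w$. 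Even with this fix, a further subtlety remains when $\#\U_w\ge 2$: two uncovered points $P_1,P_2$ together with $R-2$ points of $\K_w$ might fail general position, and ``$P_2$ uncovered by $\K_w$'' does not forbid $P_2\in\langle P_1,A_{i_1},\ldots,A_{i_{R-2}}\rangle$. One must therefore adjoin the uncovered points one at a time, recomputing $\U$ after each insertion (discarding any point that has become covered), so that the structural argument applies at every insertion.
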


\section{Asymptotic upper bounds on the length function $\ell_q(tR+1,R)$}\label{sec:qmconc}

Proposition \ref{prop5_lift_r=Rt+1} is a variant of the lift-constructions ($q^m$-concatenating constructions) for covering codes \cite{Dav90PIT,DGMP-AMC,DavOst-IEEE2001,DavOst-DESI2010,DMP-AMC2021}, \cite[Section~5.4]{CHLS-bookCovCod}.

\begin{proposition}\label{prop5_lift_r=Rt+1}
\emph{\cite[Section 2, Construction $QM_1$]{DGMP-AMC}, \cite[Proposition 8.1]{DMP-AMC2021}}
Let an $ [n_{0},n_{0}-r_0]_{q}R$ code with $n_0\le q+1$ exist. Then there is an infinite family of
$[n,n-r]_{q}R$ codes with parameters
\begin{equation*}
  n=n_{0} q^m+R\theta_{m,q},~  r=r_0+Rm,~m\ge1.
\end{equation*}
\end{proposition}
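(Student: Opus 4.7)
The plan is to construct explicitly the parity-check matrix of the asserted covering code. Let $H_0$ denote an $r_0 \times n_0$ parity-check matrix of the starting code, with columns $h_1, \ldots, h_{n_0} \in \F_q^{r_0}$. Using the isomorphism $\F_{q^m} \cong \F_q^m$, I identify the syndrome space of the new code with $\F_q^{r_0} \oplus (\F_{q^m})^R$, which has $\F_q$-dimension $r = r_0 + Rm$.

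First, I would build an $r \times n$ matrix $H$ with two blocks of columns. The \emph{main block} has $n_0 q^m$ columns: for each $i \in \{1, \ldots, n_0\}$ and each $\alpha \in \F_{q^m}$, a column of the form $(h_i,\, \alpha \cdot \mathbf{v}_i)^T$, where $\mathbf{v}_i \in (\F_{q^m})^R$ is a fixed vector attached to $h_i$ and chosen so that, whenever $h_{i_1}, \ldots, h_{i_R}$ are $\F_q$-independent, the corresponding $\mathbf{v}_{i_1}, \ldots, \mathbf{v}_{i_R}$ form an $\F_{q^m}$-basis of $(\F_{q^m})^R$. A Vandermonde-type choice such as $\mathbf{v}_i = (1, \xi_i, \xi_i^2, \ldots, \xi_i^{R-1})$ with distinct labels $\xi_i \in \F_q \cup \{\infty\}$ works here, and the hypothesis $n_0 \le q+1$ is precisely the condition that allows one to assign $n_0$ distinct labels. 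The \emph{auxiliary block} has $R\theta_{m,q}$ columns of the form $(0, w)^T$, arranged as $R$ parallel copies of a $0$-saturating configuration for one factor $\F_{q^m}$, so that any residual vector in $(\F_{q^m})^R$ can be reached by adjoining a few aux columns without disturbing the top component.

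Second, I would verify that the covering radius of the code defined by $H$ is at most $R$. Given any syndrome $(s_0, s_1) \in \F_q^{r_0} \oplus (\F_{q^m})^R$, I would invoke the covering property of the starting code to write $s_0 = \sum_{k=1}^{\ell} c_k h_{i_k}$ with $\ell \le R$ and $c_k \in \F_q^*$. In the generic case $\ell = R$, I take main-block columns $(h_{i_k}, \alpha_k \mathbf{v}_{i_k})^T$ with coefficients $c_k$; the top contribution is $s_0$ by construction, and the bottom contribution is $\sum_{k=1}^R c_k \alpha_k \mathbf{v}_{i_k} \in (\F_{q^m})^R$. Since $\mathbf{v}_{i_1}, \ldots, \mathbf{v}_{i_R}$ form an $\F_{q^m}$-basis of $(\F_{q^m})^R$, the linear system in the unknowns $c_k \alpha_k \in \F_{q^m}$ is invertible, so the $\alpha_k$ can be chosen to make the bottom equal any prescribed $s_1$.

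The main technical obstacle is the case $\ell < R$: using only $\ell$ main-block columns forces the bottom contribution to lie in the proper $\ell$-dimensional $\F_{q^m}$-subspace spanned by $\mathbf{v}_{i_1}, \ldots, \mathbf{v}_{i_\ell}$, so a generic $s_1$ is not reachable from the main block alone. The $R - \ell$ remaining ``covering slots'' must be filled from the auxiliary block; since the tops of these columns vanish, they leave $s_0$ unchanged, and the task reduces to covering a residual vector in $(\F_{q^m})^R$ by at most $R - \ell$ aux columns. The designed structure of the $R\theta_{m,q}$ aux columns (one $0$-saturating $\theta_{m,q}$-set in each of the $R$ factors $\F_{q^m}$) is tailored exactly so that this residual problem is solvable for every $\ell \in \{0, 1, \ldots, R-1\}$. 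Combining the two contributions yields a representation of $(s_0, s_1)$ as a sum of at most $R$ columns of $H$, which is the required covering radius bound; the length count $n = n_0 q^m + R\theta_{m,q}$ and the codimension $r = r_0 + Rm$ read off directly from the block structure.
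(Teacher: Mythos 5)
The paper does not give its own proof of this proposition: it is quoted from \cite{DGMP-AMC} (Construction $QM_1$) and \cite{DMP-AMC2021} (Proposition 8.1) as a ready-made tool, so there is no in-paper argument to compare against. Judged on its own terms, your attempt captures the standard architecture behind the $q^m$-concatenating constructions---a main block of $n_0q^m$ columns $(h_i,\alpha\mathbf{v}_i)^T$ with Vandermonde tails $\mathbf{v}_i$ on a rational normal curve, plus auxiliary columns with zero top---and the generic case $\ell=R$ is handled correctly: the system in the unknowns $c_k\alpha_k$ is invertible because any $R$ points on the curve are $\F_{q^m}$-independent, which is also why the hypothesis $n_0\le q+1$ enters.

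The genuine gap is exactly the step you yourself flag as ``the main technical obstacle,'' namely $\ell<R$. You assert that the auxiliary block ``is tailored exactly so that this residual problem is solvable,'' but that is precisely the claim that needs a proof, and as written it cannot even be checked because the auxiliary block is not well-specified: ``one $0$-saturating $\theta_{m,q}$-set in each of the $R$ factors $\F_{q^m}$'' does not parse dimensionally, since the factor $\F_{q^m}\cong\F_q^m$ has $\theta_{m-1,q}$ projective points, not $\theta_{m,q}$. What is missing is a concrete description of the $R\theta_{m,q}$ auxiliary columns together with the verification that, for every $\ell<R$ and every choice of the $\ell$ Vandermonde vectors (and hence every residual coset in $(\F_{q^m})^R$), the residual can be written as a combination of at most $R-\ell$ auxiliary columns with coefficients in $\F_q$. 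This requires, at minimum, an exchange-type argument completing $\{\mathbf{v}_{i_1},\dots,\mathbf{v}_{i_\ell}\}$ to a basis of $(\F_{q^m})^R$ by coordinate vectors that are projectively realized by the auxiliary columns. Without it, the covering-radius bound $R$ for the lifted code---the entire content of the proposition---is not established.
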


We apply Proposition \ref{prop5_lift_r=Rt+1}, taking as $ [n_{0},n_{0}-r_0]_{q}R$ code the one corresponding to the $(R-1)$-saturating set obtained by Construction B, to construct infinite families of covering codes with growing codimension $r=tR+1$, $t\ge1$.
These families give asymptotic upper bounds  on the length function $\ell_q(tR+1,R)$  for growing $t\ge1$.

\begin{theorem}
Let $R\ge3$ be fixed. Let $t\ge1$. For the smallest
length of a $ q $-ary linear code with codimension (redundancy) $r=tR+1$ and covering radius $R$ (i.e. for the length function $\ell_q(tR+1,R)$) and for the smallest size $s_q(tR,R-1)$ of an $(R-1)$-saturating set in the projective space $\PG(tR,q)$
the following asymptotic upper bounds hold:
  \begin{align}
   &\bullet~\ell_q(tR+1,R) =s_q(tR,R-1)\le\sqrt[R]{\frac{R!}{R^{R-2}}}\cdot q^{(r-R)/R}\cdot\sqrt[R]{\ln q}+
   (1+R)q^{(r-R-1)/R}\notag\db\\
   &\hspace{0.4cm}+R(q^{(r-R-1)/R}-1)/(q-1)= \sqrt[R]{\frac{R!}{R^{R-2}}}\cdot q^{(r-R)/R}\cdot\sqrt[R]{\ln q}+o(q^{(r-R)/R}), \label{eq42:bound}\db\\
    &\hspace{0.6cm}r=tR+1, ~t\ge1,~ q\text{ is an arbitrary prime power},~q \text{ is large enough};\notag\db\\
    &\bullet~\text{ if additionally }R\text{ is large enough then in \eqref{eq42:bound} we have } \sqrt[R]{\frac{R!}{R^{R-2}}}\thicksim\frac{1}{e}\thickapprox0.3679.\label{eq42:newboundRbig}
  \end{align}
  The bounds are provided by an infinite family of $[n,n-r]_qR$ codes of the corresponding lengths.
\end{theorem}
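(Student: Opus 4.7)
The plan is to apply the lift-construction of Proposition \ref{prop5_lift_r=Rt+1} to the AMDS starting code supplied by Construction B and bounded in Theorem \ref{th32:bound}. More precisely, for each fixed $t\ge 1$, I will identify the integer $m$ in Proposition \ref{prop5_lift_r=Rt+1} with $m=t-1$, so that the output codimension equals
\begin{equation*}
r_0+Rm=(R+1)+R(t-1)=tR+1,
\end{equation*}
as required. All the codes produced as $m$ ranges over $\ge0$ thus form the desired infinite family with codimension $r=tR+1$.

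First, I would check that the hypothesis $n_0\le q+1$ of Proposition \ref{prop5_lift_r=Rt+1} is satisfied for the starting code. By Theorem \ref{th32:bound} there exists a starting $[n_0,n_0-(R+1)]_qR$ code with
\begin{equation*}
n_0\le \sqrt[R]{\tfrac{R!}{R^{R-2}}}\cdot q^{1/R}\cdot\sqrt[R]{\ln q}+1+R,
\end{equation*}
and since the right-hand side is $O(q^{1/R}(\ln q)^{1/R})=o(q)$, it is strictly less than $q+1$ once $q$ is large enough (with $R\ge3$ fixed). So the lift-construction applies, yielding codes of length
\begin{equation*}
n=n_0\, q^{t-1}+R\,\theta_{t-1,q}.
\end{equation*}

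Next, I would substitute the bound on $n_0$ and read off the asymptotics. Using $(r-R)/R=t-1+1/R$ and $(r-R-1)/R=t-1$, one gets
\begin{align*}
n&\le \Bigl(\sqrt[R]{\tfrac{R!}{R^{R-2}}}\cdot q^{1/R}\sqrt[R]{\ln q}+1+R\Bigr)q^{t-1}+R\,\tfrac{q^{t}-1}{q-1}\\
&= \sqrt[R]{\tfrac{R!}{R^{R-2}}}\cdot q^{(r-R)/R}\cdot\sqrt[R]{\ln q}+(1+R)\,q^{(r-R-1)/R}+R\,\tfrac{q^{t}-1}{q-1},
\end{align*}
which, on absorbing the lower-order $O(q^{t-1})$ contributions into the $o(q^{(r-R)/R})$ remainder (note $q^{t-1+1/R}/q^{t-1}=q^{1/R}\to\infty$), gives exactly the bound \eqref{eq42:bound}. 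The identity $\ell_q(tR+1,R)=s_q(tR,R-1)$ is \eqref{eq1:s=ell}.

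For the second assertion \eqref{eq42:newboundRbig}, I would simply invoke Lemma \ref{lem_lim_cnew}, which yields $\sqrt[R]{R!/R^{R-2}}\to 1/e$ as $R\to\infty$. The whole argument is routine once the starting code from Construction B is in hand; the only point that requires any care is the verification that $n_0\le q+1$, so that Proposition \ref{prop5_lift_r=Rt+1} is actually applicable, and that the error terms arising from the $+1+R$ in the bound on $n_0$ and from $R\theta_{t-1,q}$ are indeed dominated by $q^{(r-R)/R}\sqrt[R]{\ln q}$. Neither of these is a real obstacle; the essential mathematical work has already been done in Sections \ref{sec:constrB} and \ref{sec:est_size}.
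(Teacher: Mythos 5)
Your proposal follows the paper's proof essentially verbatim: apply Proposition~\ref{prop5_lift_r=Rt+1} with $m=t-1$ to the Construction~B starting code, check $n_0\le q+1$ for $q$ large (using $n_0=O(q^{1/R}(\ln q)^{1/R})=o(q)$), rewrite the exponents via $(r-R)/R=t-1+1/R$ and $(r-R-1)/R=t-1$, absorb the lower-order terms into $o(q^{(r-R)/R})$, and invoke Lemma~\ref{lem_lim_cnew} for the $R\to\infty$ statement.

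Two small bookkeeping issues that do not affect the asymptotic claim but are worth noting. First, you apply the lift with $m=t-1$ for all $t\ge1$, but Proposition~\ref{prop5_lift_r=Rt+1} is stated for $m\ge1$; for $t=1$ one should simply use the starting code of length $n_0$ directly (then the stated explicit bound reduces, correctly, to $n_0\le c^{new}_R q^{1/R}\sqrt[R]{\ln q}+1+R$ from Theorem~\ref{th32:bound}, since the $R(q^{(r-R-1)/R}-1)/(q-1)$ term vanishes). Second, your lower-order term is $R\theta_{t-1,q}=R(q^t-1)/(q-1)$, whereas the theorem statement displays $R(q^{(r-R-1)/R}-1)/(q-1)=R(q^{t-1}-1)/(q-1)$; these differ by $Rq^{t-1}$. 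Tracing through $n=n_0q^{t-1}+R\theta_{t-1,q}$ with $\theta_{m,q}=(q^{m+1}-1)/(q-1)$ and $n_0\le c^{new}_R q^{1/R}\sqrt[R]{\ln q}+1+R$ actually yields your version, so there appears to be a typographical slip in the paper's explicit displayed bound; in any case both versions are $o(q^{(r-R)/R})$, so the asymptotic inequality \eqref{eq42:bound} is correct either way.
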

\begin{proof}
 For $q$ large enough, it can be shown that $\sqrt[R]{\frac{R!}{R^{R-2}}}\cdot q^{1/R}\cdot\sqrt[R]{\ln q}+1+R<q+1$. Therefore we can take the code corresponding to the $(R-1)$-saturating set obtained by Construction B as the $ [n_{0},n_{0}-r_0]_{q}R$ code of Proposition \ref{prop5_lift_r=Rt+1}. In that, $r_0=R+1$, $m=t-1$, $r=R+1+(t-1)R$, $t-1=(r-R-1)/R$. This proves \eqref{eq42:bound}. For \eqref{eq42:newboundRbig} we apply \eqref{eq1:lim cnew}.
\end{proof}

\section{Properties of  $c^{new}_{R}=\sqrt[R]{\frac{R!}{R^{R-2}}}$. Comparison of new and known bounds}\label{sec:propCnew}

 In this section, we investigate properties of the constant  $c^{new}_{R}$ of the new asymptotic upper bound on $\ell_q(tR+1,R) =s_q(tR,R-1)$, and we show that it is essentially better than the one previously known in the literature. The limit $ \lim\limits_{R\rightarrow\infty} c^{new}_{R}=e^{-1}\thickapprox0.3679$ is noted in \eqref{eq1:lim cnew}.

The following lemma states that  $c^{new}_{R}$ is bounded from above and from below  by decreasing functions  of $R$.
\begin{lemma}\label{lem5:r=R+1}
Let $R\ge3$.
\begin{description}
  \item[(i)] The following functions are decreasing functions  of $R$:
  \begin{align*}
   \frac{1}{e}\sqrt[2R]{2\pi R^5},~\frac{1}{eR}\sqrt[2R]{2\pi R^5},~\frac{1}{e}\sqrt[2R]{2\pi R^5} \cdot\sqrt[12R^2]{e},~\frac{1}{eR}\sqrt[2R]{2\pi R^5} \cdot\sqrt[12R^2]{e}.
  \end{align*}

  \item[(ii)] The values $c^{new}_{R}=\sqrt[R]{\frac{R!}{R^{R-2}}}$ \eqref{eq1:c new} and $\frac{1}{R}c^{new}_{R}=\frac{1}{R}\sqrt[R]{\frac{R!}{R^{R-2}}}$ are bounded from above and from below  by decreasing functions  of $R$.
\end{description}
  \end{lemma}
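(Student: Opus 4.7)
The plan is to prove (i) by showing each of the four functions is a positive product of positive decreasing functions of $R$, and then to deduce (ii) immediately from the Stirling-type two-sided bound \eqref{eq5:le cnew le} already established earlier in the section.

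For (i), I would first record the two elementary facts that on $R\ge3$ both $1/R$ and $\ln R/R$ are positive and strictly decreasing (the latter because the derivative of $\ln x/x$ is $(1-\ln x)/x^2<0$ for $x>e$). From this,
\[
\frac{\ln(2\pi R^5)}{2R}=\frac{\ln(2\pi)}{2R}+\frac{5}{2}\cdot\frac{\ln R}{R}
\]
is a sum of two positive decreasing functions of $R\ge3$, hence itself a positive decreasing function. Composing with the increasing function $\exp$ gives that $\sqrt[2R]{2\pi R^5}=\exp\!\bigl(\tfrac{\ln(2\pi R^5)}{2R}\bigr)$ is decreasing in $R\ge3$, and multiplying by the positive constant $1/e$ gives the first claim. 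The remaining three functions are obtained from the first by multiplying by one or both of the positive decreasing factors $1/R$ and $\sqrt[12R^2]{e}=\exp(1/(12R^2))$ (decreasing because $1/(12R^2)$ is), and a finite product of positive decreasing functions is positive decreasing, which finishes (i).

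For (ii), the inequality \eqref{eq5:le cnew le} gives
\[
\frac{1}{e}\sqrt[2R]{2\pi R^5}\;<\;c^{new}_{R}\;<\;\frac{1}{e}\sqrt[2R]{2\pi R^5}\cdot\sqrt[12R^2]{e},
\]
so $c^{new}_R$ lies between two functions that are decreasing in $R$ by (i). Dividing the whole chain by $R>0$,
\[
\frac{1}{eR}\sqrt[2R]{2\pi R^5}\;<\;\frac{1}{R}c^{new}_{R}\;<\;\frac{1}{eR}\sqrt[2R]{2\pi R^5}\cdot\sqrt[12R^2]{e},
\]
and again both bounding functions are decreasing in $R$ by (i). This proves (ii).

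There is no real obstacle: everything reduces to the monotonicity of $\ln R/R$ for $R\ge e$, together with the observation that positivity and monotone decrease are preserved by finite products. The only point that requires a momentary check is that the exponent $\tfrac{\ln(2\pi)}{2R}+\tfrac{5}{2}\cdot\tfrac{\ln R}{R}$ is still decreasing once the constant $\ln(2\pi)/2$ is folded in; logarithmic differentiation gives $\tfrac{d}{dR}\bigl[\tfrac{\ln(2\pi R^5)}{2R}\bigr]=\tfrac{5-\ln(2\pi R^5)}{2R^2}$, and $\ln(2\pi R^5)\ge\ln(2\pi\cdot 27)>5$ already at $R=3$, so the derivative is negative on the whole range under consideration. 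This can be inserted as a one-line verification if a more computational style is preferred to the decomposition argument above.
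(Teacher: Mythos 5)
Your proposal is correct and follows essentially the same route as the paper: both arguments reduce to the monotonicity of $\ln R/R$ for $R>e$ (equivalently, $(1-\ln R)/R^2<0$), factor the target functions into positive decreasing pieces, and then invoke the two-sided Stirling bound \eqref{eq5:le cnew le} to deduce part (ii). The only cosmetic difference is that the paper differentiates $\sqrt[2R]{R^5}$ directly and treats $\tfrac{1}{e}\sqrt[2R]{2\pi}$, $\sqrt[12R^2]{e}$, $1/R$ as separate decreasing factors, whereas you work in the exponent, split $\tfrac{\ln(2\pi R^5)}{2R}=\tfrac{\ln(2\pi)}{2R}+\tfrac{5}{2}\tfrac{\ln R}{R}$ into two decreasing summands, and then exponentiate; your explicit insistence that the decreasing factors be \emph{positive} before taking products is a small but welcome tightening of the paper's phrasing (``the product of decreasing functions is decreasing'' is false without that qualifier).
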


\begin{proof}
  For $R\ge3$,  we have the derivative
\begin{align*}
  &\frac{d}{dR}\left(\sqrt[2R]{R^5}\right)=2.5\sqrt[2R]{R^5}  \frac{1-\ln R}{R^2}<0.
\end{align*}
So, $\sqrt[2R]{ R^5} $ is a decreasing function of $R$.
Obviously, also $\frac{1}{e}\sqrt[2R]{2\pi}$, $\sqrt[12R^2]{e}$, and $\frac{1}{R}$ are decreasing functions of $R$.
The product of decreasing functions is a decreasing function too. Finally, we use \eqref{eq5:le cnew le}.
\end{proof}

Examples of the value of $c^{new}_{R}$  \eqref{eq1:c new} and its lower and upper approximations of \eqref{eq5:le cnew le}, \eqref{eq5:approximRbig} are given in Table~\ref{tab1} from which one sees that the approximations are sufficiently convenient.

\begin{table}[htbp]
    \centering
    \caption{Values connected with the new upper bound and its approximations}
    \label{tab1}
\begin{tabular}{c|c|c|c}
  \hline
  $R$ &$\frac{1}{e}\sqrt[2R]{2\pi R^5}$&$c^{new}_{R}=\sqrt[R]{\frac{R!}{R^{R-2}}} \vphantom{H^{H^{H^{H^H}}}}$
  &$\frac{\sqrt[2R]{2\pi R^5}\cdot\sqrt[12R^2]{e} }{e}$\\\hline
  3 &1.24835051$\thickapprox 0.4161R$ &1.25992105$\thickapprox 0.4200R$& 1.25996299\\
  4 &1.10094468$\thickapprox 0.2752R$&1.10668192$\thickapprox 0.2767R$& 1.10669372\\
  5 &0.98857246$\thickapprox 0.1977R$&0.99186884$\thickapprox 0.1984R$ & 0.99187320\\
  6 &0.90458669$\thickapprox 0.1508R$&0.90668114$\thickapprox 0.1511R$& 0.90668307 \\
  7 &0.84050266$\thickapprox 0.1201R $&0.84193234$\thickapprox 0.1203R$& 0.84193331\\
  8 &0.79032802$\thickapprox 0.0988R$&0.79135723$\thickapprox 0.0989R$& 0.79135777\\
  9 &0.75009489$\thickapprox 0.0833R$&0.75086667$\thickapprox 0.0834R$& 0.75086699\\
  10&0.71715745$\thickapprox 0.0717R$&0.71775513$\thickapprox 0.0718R$& 0.71775533\\
  25&0.52657849$\thickapprox 0.0211R $&0.52664870$\thickapprox 0.0211R $& 0.52664871\\
   50&0.45565466$\thickapprox0.0091R $&0.45566985$\thickapprox 0.0091R$& 0.45566985\\
  100&0.41657808$\thickapprox0.0042R $&0.41658155$\thickapprox 0.0042R$& 0.41658155\\
  125&0.40816564$\thickapprox0.0033R $&0.40816781$\thickapprox 0.0033R$& 0.40816781\\
  150&0.40237807$\thickapprox0.0027R $&0.40237956$\thickapprox 0.0027R$& 0.40237956\\
  \hline
\end{tabular}
  \end{table}

If $R$ is large enough and the codimension $r=R+1$, the following lemma states that $c^{new}_{R}$ is about $R+1$ times better than the corresponding value previously known in the literature.

 \begin{lemma}
Let $R\ge4$, $r=R+1$. Then $\frac{c^{knw}_{R}}{c^{new}_{R}}=\frac{R^2}{R-1}\sqrt[R]{\frac{R-1}{R}}$ \eqref{eq1:knw/new} is an increasing function of $R$. Also,
\begin{equation}\label{eq5:r=R+1}
  \frac{c^{knw}_{R}}{c^{new}_{R}}=\frac{R^2}{R-1}\sqrt[R]{\frac{R-1}{R}}\thickapprox R+1\text{ if $R$ is large enough.}
\end{equation}
  \end{lemma}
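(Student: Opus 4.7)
The plan is to prove the two assertions separately, both by elementary real analysis.

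For the asymptotic equivalence $\frac{R^2}{R-1}\sqrt[R]{(R-1)/R} \sim R+1$, I would first rewrite the rational factor as $\frac{R^2}{R-1} = R+1 + \frac{1}{R-1}$, so that $\frac{1}{R+1}\cdot\frac{R^2}{R-1} = 1 + \frac{1}{R^2-1} \to 1$. For the radical factor, note that $\sqrt[R]{(R-1)/R} = \exp\bigl(\tfrac{1}{R}\ln(1-\tfrac{1}{R})\bigr)$, and the exponent is $-\tfrac{1}{R^2} + O(R^{-3}) \to 0$, hence the radical itself tends to $1$. Multiplying the two estimates gives $\frac{R^2}{R-1}\sqrt[R]{(R-1)/R} = (R+1)(1+o(1))$, which is exactly \eqref{eq5:r=R+1}.

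For the monotonicity of $f(R) = \frac{R^2}{R-1}\sqrt[R]{(R-1)/R}$, I would apply logarithmic differentiation, treating $R$ as a continuous real variable. Since
\begin{equation*}
\ln f(R) = 2\ln R - \ln(R-1) + \frac{\ln(R-1)-\ln R}{R},
\end{equation*}
differentiating, and using $(g/R)' = (Rg'-g)/R^2$ for $g(R) = \ln(R-1)-\ln R$ with $g'(R)=\frac{1}{R(R-1)}$, one obtains
\begin{equation*}
\frac{f'(R)}{f(R)} = \frac{2}{R} - \frac{1}{R-1} + \frac{1}{R^2(R-1)} + \frac{1}{R^2}\ln\frac{R}{R-1}.
\end{equation*}
The leading combination $\frac{2}{R}-\frac{1}{R-1} = \frac{R-2}{R(R-1)}$ is strictly positive for $R\geq 3$, and the remaining two terms are also positive, since $R/(R-1)>1$. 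Hence $f'(R)>0$ on $R\geq 4$ (indeed on $R\geq 3$), which proves that $f$ is strictly increasing there.

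There is no real obstacle here: once the logarithm is taken, the sign of the derivative reduces to the single observation $\frac{2}{R}>\frac{1}{R-1}$ for $R\geq 3$, and the asymptotic part is just a direct expansion of $\ln(1-1/R)$. If one prefers to stay within integer $R$, a purely discrete argument establishing $f(R+1)>f(R)$ could replace the continuous derivative computation, but the continuous version is noticeably cleaner and suffices since integer monotonicity follows from monotonicity on the reals.
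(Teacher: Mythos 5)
Your proof is correct for both assertions, and it takes a somewhat different route from the paper, especially for the asymptotic part. For the monotonicity claim, you use logarithmic differentiation and split $f'(R)/f(R)$ into four manifestly positive summands, whereas the paper differentiates $f$ directly and simplifies to the single compact expression
\begin{equation*}
f'(R)=\frac{1}{R-1}\sqrt[R]{\frac{R-1}{R}}\left(R-\ln\frac{R-1}{R}-1\right),
\end{equation*}
from which positivity is immediate since $-\ln\frac{R-1}{R}>0$. The two are algebraically equivalent (multiplying your four-term expression by $f(R)$ and simplifying reproduces the paper's factorization), but your version is arguably easier to verify since positivity is read off term by term without further algebra. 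For the asymptotic claim, your argument via $\frac{R^2}{R-1}=R+1+\frac{1}{R-1}$ and the expansion of $\frac{1}{R}\ln\bigl(1-\frac{1}{R}\bigr)$ is considerably shorter and more elementary than the paper's, which establishes $\lim_{R\to\infty}\bigl(f(R)-R\bigr)=1$ through a chain of rewritings ending in l'H\^opital's rule. One small caveat: you conclude $f(R)=(R+1)\bigl(1+o(1)\bigr)$, a ratio-asymptotic, while the paper proves the strictly stronger additive statement $f(R)-(R+1)\to0$; the additive form does imply yours, but not conversely, and the numerical convergence in the paper's Table 2 suggests the additive form is what the authors intend. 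Your ingredients already give it --- since you track the $O(R^{-2})$ error in the exponent, multiplying out yields $f(R)=R+1+O(R^{-1})$ --- so it is only a one-line addition to close the distinction.
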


\begin{proof}
  For $R\ge3$, we have the derivative
  \begin{align*}
   & \frac{d}{dR}\left(\frac{R^2}{R-1}\sqrt[R]{\frac{R-1}{R}} \right)=\frac{1}{R-1}\sqrt[R]{\frac{R-1}{R}}\left(R-\ln\frac{R-1}{R}-1\right)>0.
  \end{align*}

  Also,
  \begin{align}\label{limit2}
&\lim_{R\rightarrow\infty}\left(\frac{R^2}{R-1}\sqrt[R]{\frac{R-1}{R}}-R \right)=1.
  \end{align}
In fact,
  \begin{align*}
&\lim_{R\rightarrow\infty}\left(\frac{R^2}{R-1}\sqrt[R]{\frac{R-1}{R}}-R \right)=
\lim_{R\rightarrow\infty}\left(R\sqrt[R]{\frac{R^{(R-1)}}{(R-1)^{(R-1)}}}-R \right)\\
&=\lim_{R\rightarrow\infty}\left( \frac{R}{(R-1)^\frac{R-1}{R}}\left( R^\frac{R-1}{R} - (R-1)^\frac{R-1}{R} \right) \right)=\lim_{R\rightarrow\infty}\left( R^\frac{R-1}{R} - (R-1)^\frac{R-1}{R} \right) \\
&=\lim_{R\rightarrow\infty}\left( e^{\frac{R-1}{R} \ln(R)}  - e^{\frac{R-1}{R} \ln(R-1)} \right) = \lim_{R\rightarrow\infty} \left( e^{\frac{R-1}{R} \ln(R-1)} \left( e^{\frac{R-1}{R} \ln\left(\frac{R}{R-1}\right)}  - 1 \right)\right) \\
&=\lim_{R\rightarrow\infty} \left( (R-1)^\frac{R-1}{R} \left( e^{\frac{R-1}{R} \ln\left(\frac{R}{R-1}\right)}  - 1 \right)\right).
\end{align*}
As  $e^{f(x)} -1 \thickapprox f(x)$ if $f(x) \rightarrow 0$,
\begin{align*}
&\lim_{R\rightarrow\infty} \left( (R-1)^\frac{R-1}{R} \left( e^{\frac{R-1}{R} \ln\left(\frac{R}{R-1}\right)}  - 1 \right)\right)=
\lim_{R\rightarrow\infty} \left( (R-1)^\frac{R-1}{R} \frac{R-1}{R} \ln\left(\frac{R}{R-1}\right) \right) \\
&=\lim_{R\rightarrow\infty} \left( (R-1) \ln\left(\frac{R}{R-1}\right) \right) =\lim_{R\rightarrow\infty} \frac{ \ln\left(\frac{R}{R-1}\right)}{ \frac{1}{R-1}}.
\end{align*}
Applying l'H\^{o}pital's rule, we finally obtain
  \begin{align*}
& \lim_{R\rightarrow\infty} \frac{ \ln\left(\frac{R}{R-1}\right)}{ \frac{1}{R-1}}=
\lim_{R\rightarrow\infty} \frac{ \frac{1}{R-R^2}}{ \frac{1}{-(R-1)^2}}=1.
\end{align*}
    The assertion \eqref{eq5:r=R+1} follows from \eqref{limit2}.
\end{proof}

If $R$ is large enough and the codimension $r=tR+1, t\ge2$, the following lemma states that $c^{new}_{R}$ is about $9.32 R$ times better than the corresponding value previously known in the literature.

\begin{lemma}
Let $R\ge3, r=tR+1, t\ge2$. Then $\frac{c^{knw}_{R}}{c^{new}_{R}}=\frac{3.43R}{c^{new}_{R}}$ \eqref{eq1:knw/new}
  is bounded from above and from below by increasing functions  of $R$. Moreover,  if $R$ is large enough, then $c^{knw}_{R}/c^{new}_R\thickapprox 3.43e\thickapprox9.32R$.
\end{lemma}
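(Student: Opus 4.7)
The plan is to deduce both claims directly from structural facts about $c^{new}_{R}=\sqrt[R]{R!/R^{R-2}}$ that have already been established earlier in Section \ref{sec:propCnew}, namely the sandwich \eqref{eq5:le cnew le}, the monotonicity statement of Lemma \ref{lem5:r=R+1}(i), and the limit in Lemma \ref{lem_lim_cnew}.

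For the boundedness-by-increasing-functions claim, I would first invoke \eqref{eq5:le cnew le}, i.e.\
\[
\tfrac{1}{e}\sqrt[2R]{2\pi R^{5}}\ \le\ c^{new}_{R}\ \le\ \tfrac{1}{e}\sqrt[2R]{2\pi R^{5}}\cdot \sqrt[12R^{2}]{e},
\]
and invert it to obtain the companion sandwich
\[
\frac{3.43R}{\tfrac{1}{e}\sqrt[2R]{2\pi R^{5}}\cdot\sqrt[12R^{2}]{e}}\ \le\ \frac{3.43R}{c^{new}_{R}}\ \le\ \frac{3.43R}{\tfrac{1}{e}\sqrt[2R]{2\pi R^{5}}}.
\]
By Lemma \ref{lem5:r=R+1}(i), both denominators on the outer ends are positive decreasing functions of $R$, so their reciprocals are increasing. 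Multiplying by the strictly increasing linear factor $3.43R$ preserves monotonicity, so both the left and right members of the sandwich are increasing functions of $R$. This gives the first assertion of the lemma.

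For the asymptotic claim, I would appeal to Lemma \ref{lem_lim_cnew}, which yields $\lim_{R\to\infty}c^{new}_{R}=1/e$. Consequently
\[
\frac{3.43R}{c^{new}_{R}}\ \sim\ 3.43R\cdot e\ =\ 3.43\,e\,R\ \thickapprox\ 9.32R,
\]
as required. Equivalently, one can observe directly from \eqref{eq5:le cnew le} that the two decreasing bounding functions both converge to $1/e$ (since $\sqrt[2R]{2\pi R^{5}}\to 1$ and $\sqrt[12R^{2}]{e}\to 1$), and pass to the limit inside the sandwich above to reach the same conclusion.

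There is no real obstacle: everything reduces to monotonicity of reciprocals of positive decreasing functions and to the already-proved limit. The only care I would take is to note that the sandwich bounds are strictly positive for all $R\ge 3$ (which is clear, since $\frac{1}{e}\sqrt[2R]{2\pi R^5}>0$), so that inversion is legitimate and preserves the inequality direction.
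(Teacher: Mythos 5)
Your proposal is correct and follows essentially the same route as the paper. The paper's own proof is just a one-line citation of Lemma \ref{lem5:r=R+1} for monotonicity and \eqref{eq1:lim cnew} for the asymptotics; you re-derive the monotonicity step explicitly by inverting the Stirling sandwich \eqref{eq5:le cnew le} and invoking Lemma \ref{lem5:r=R+1}(i), but note that Lemma \ref{lem5:r=R+1}(ii) already records that $\tfrac{1}{R}c^{new}_R$ is bounded above and below by decreasing functions, so $\tfrac{3.43R}{c^{new}_R}=\tfrac{3.43}{(1/R)c^{new}_R}$ is bounded by increasing functions directly — the same content, just pre-packaged.
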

\begin{proof}
  The first assertion follows from Lemma \ref{lem5:r=R+1}. The second one follows from \eqref{eq1:lim cnew}.
\end{proof}
\begin{table}[htbp]
    \centering
    \caption{Values connected with comparison of the new and known upper bounds;\newline
     $A=\frac{R}{R-1}\sqrt[R]{R(R-1)\cdot R!}$, $B=\frac{R^2}{R-1}\sqrt[R]{\frac{R-1}{R}}\thickapprox R+1$ \medskip}
    \label{tab2}
\begin{tabular}{r|r|r|r|r}
  \hline
  $R$ &$c^{new}_{R}=\vphantom{H^{H^{H^{H^H}}}}$  & $c^{knw}_{R}= $
  &$\frac{c^{knw}_{R}}{c^{new}_{R}}=B$&$\frac{c^{knw}_{R}}{c^{new}_{R}}=\frac{3.43R}{c^{new}_{R}}$\\
  &$\sqrt[R]{\frac{R!}{R^{R-2}}}$ &A~~~&$r=R+1$&$r=tR+1$\\
  &&&&$t\ge2$~~~~~\\\hline
  4 &1.1067&5.493&4.9632 &$12\thickapprox3.10R$\\
  5 &0.9919&5.929&5.9772&$17\thickapprox3.46R $\\
  6 &0.9067&6.333&6.9845&$23 \thickapprox3.78R$\\
  7 &0.8419&6.726&7.9888&$29 \thickapprox4.07R$\\
  8 &0.7914&7.116&8.9915&$35\thickapprox4.33R$\\
  9 &0.7509&7.504&9.9934&$41\thickapprox4.57R $\\
  10&0.7178&7.892&10.9947&$48\thickapprox 4.78R$\\
  25&0.5266&13.692&25.9992&$163\thickapprox6.51R$ \\
  50&0.4557&23.239&50.9998&$376\thickapprox7.53R$ \\
 100&0.4166&42.075&100.9999&$823\thickapprox8.23R$ \\
 125&0.4082&51.429&126.0000&$1050\thickapprox8.40R$ \\
 150&0.4024&60.759&151.0000&$1279\thickapprox 8.52R $\\
  \hline
\end{tabular}
  \end{table}

In Table \ref{tab2}, examples of the values connected with comparison of the new and known upper bounds are given.

From the results of this section, one sees that the new bounds are essentially better than the known ones.


\begin{thebibliography}{99}

\bibitem{AlderBruen-AMDS}
Alderson T.L., Bruen A.A.: Maximal AMDS codes.
Applicable Algebra Engineer. Commun. Comput. \textbf{19}(2), 87--98  (2008).
\url{https://doi.org/10.1007/s00200-008-0058-0}.

\bibitem{BDGMP-R2R3CC_2019}
Bartoli D., Davydov A.A., Giulietti M., Marcugini S., Pambianco F.:
New bounds for linear codes of covering radii 2 and 3.
Crypt. Commun. \textbf{11}(5), 903--920 (2019).
\url{https://doi.org/10.1007/s12095-018-0335-0}.

\bibitem{Bier}
Bierbrauer J.: Introduction to Coding Theory. 2$^{nd}$ edn.
Chapman and Hall/CRC Press (2017).

\bibitem{BoseBurt}
Bose R.C., Burton R.C.,
A characterization of flat spaces in a finite geometry and the uniqueness of the Hamming and McDonald codes.
J. Comb. Theory, \textbf{1}(1), 96--104 (1966).
\url{https://doi.org/10.1016/S0021-9800(66)80007-8}.

\bibitem{Handbook-coverings}
Brualdi R.A., Litsyn S., Pless V.: Covering radius. In: Pless V.S., Huffman W.C. (eds.)
Handbook of Coding Theory vol. \textbf{1}.
  Elsevier, Amsterdam, pp. 755--826 (1998).

\bibitem{Graismer-2023}
Chen H., Qu L., Li C., Lyu S., Xu L.: Generalized Singleton type upper bounds,
arXiv:2208.01138 [cs IT]  (2020).
\url{https://doi.org/10.48550/arXiv.2208.01138}.

\bibitem{CHLS-bookCovCod}
Cohen G., Honkala I., Litsyn S., Lobstein A.:
Covering Codes.
North-Holland Math. Library, \textbf{54}, Elsevier, Amsterdam, The Netherlands (1997).

\bibitem{Dav90PIT}
Davydov A.A.:
Construction of linear covering codes.
Probl. Inform. Transmiss. \textbf{26}(4), 317--331 (1990).
\url{http://iitp.ru/upload/publications/6833/ConstrCoverCodes.pdf}.

\bibitem{DGMP-AMC}
Davydov A.A., Giulietti M., Marcugini S., Pambianco F.:
Linear nonbinary covering codes and saturating sets in projective spaces. Adv. Math. Commun. \textbf{5}(1), 119--147 (2011).
\url{https://doi.org/10.3934/amc.2011.5.119}.

 \bibitem{DMP-R=tR2019}
Davydov A.A., Marcugini S., Pambianco F.:
New covering codes of radius $R$, codimension $tR$ and $tR+\frac{R}{2}$, and saturating sets in projective spaces.
Des. Codes Cryptogr. \textbf{87}(12), 2771--2792 (2019).
\url{https://doi.org/10.1007/s10623-019-00649-2}.

\bibitem{DMP-R=3Redun2019}
Davydov A.A., Marcugini S., Pambianco F.:
 New bounds for linear codes of covering radius 3 and 2-saturating sets in projective spaces. In:
Proc. 2019 XVI Int. Symp. Problems Redundancy Inform. Control
Systems (REDUNDANCY), Moscow, Russia, Oct. 2019, IEEE Xplore, pp. 47--52 (2020).
\url{https://doi.org/10.1109/REDUNDANCY48165.2019.9003348}.

  \bibitem{DMP-AMC2021}
Davydov A.A., Marcugini S., Pambianco F.:
 Upper bounds on the length function for covering codes with covering radius $R$ and codimension $tR+1$.
Adv. Math. Commun. \textbf{17}(1),  98--118 (2023).
\url{https://doi.org/10.3934/amc.2021074}.

\bibitem{DMP-rad3arXiv2023}
Davydov A.A., Marcugini S., Pambianco F.:
 New bounds for covering codes of radius 3 and
codimension $3t + 1$,  Adv. Math. Commun., to appear.
\url{https://doi.org/10.3934/amc.2023042}.

\bibitem{DavOst-IEEE2001}
Davydov A.A., \"{O}sterg{\aa}rd P.R.J.:
 Linear codes with covering radius $R=2,3$ and codimension $tR$.
IEEE Trans. Inform. Theory \textbf{47}(1), 416--421 (2001).
 \url{https://doi.org/10.1109/18.904551}.

\bibitem{DavOst-DESI2010}
Davydov A.A., \"{O}sterg{\aa}rd P.R.J.:
 Linear codes with covering radius 3.
Des. Codes Cryptogr. \textbf{54}(3), 253--271 (2010).
\url{https://doi.org/10.1007/s10623-009-9322-y}.

\bibitem{Boer-AMDS}
De Boer M.A.: Almost MDS codes. Des. Codes Cryptogr. \textbf{9}(2), 143--155 (1996).
\url{https://doi.org/10.1007/BF00124590}.

\bibitem{denaux}
Denaux L.:
Constructing saturating sets in projective spaces using subgeometries.
Des. Codes Cryptogr. \textbf{90}(9), 2113--2144 (2022).
\url{https://doi.org/10.1007/s10623-021-00951-y}.

\bibitem{denaux2}
Denaux L.:
 Higgledy-piggledy sets in projective spaces of small dimension.
Electron. J. Combin. \textbf{29}(3), Article \#P3.29 (2022).
\url{https://doi.org/10.37236/10736}.

\bibitem{DodunLang-NMDS}
Dodunekov S., Landgev I.: On near-MDS codes.
J. Geometry \textbf{54}(1), 30--43 (1995).
\url{https://doi.org/10.1007/BF01222850}.

\bibitem{EtzStorm2016}
Etzion T., Storme L.:
 Galois geometries and coding theory. Des. Codes Cryptogr. \textbf{78}(1), 311--350 (2016).
\url{https://doi.org/10.1007/s10623-015-0156-5}.

\bibitem{Giul2013Survey}
Giulietti M.:
 The geometry of covering codes: small complete caps and saturating sets in Galois spaces. In: Blackburn S. R., Holloway  R., Wildon M. (eds.)
 Surveys in Combinatorics 2013, Lecture Note Series \textbf{409}, pp. 51--90. London Math. Soc., Cambridge University Press, Cambridge, (2013).
\url{https://doi.org/10.1017/CBO9781139506748.003}.

\bibitem{GrahSlo}
Graham R.L., Sloane N.J.A.:
On the covering radius of codes. IEEE Trans. Inform. Theory \textbf{31}(1), 385--401 (1985).
\url{https://doi.org/10.1109/TIT.1985.1057039}.

\bibitem{HegNagy}
H\'{e}ger T., Nagy Z.L.:
 Short minimal codes and covering codes via strong blocking sets in projective spaces. IEEE Trans. Inform. Theory \textbf{68}(2), 881--890 (2022).
\url{https://doi.org/10.1109/TIT.2021.3123730}.

\bibitem{Hirs}
Hirschfeld J.W.P.: Projective Geometries over Finite Fields.
2$^{nd}$ edition, Oxford Univ. Press, Oxford, (1999).

\bibitem{HirsStor-2001}
Hirschfeld J.W.P., Storme L.:
The  packing problem in statistics, coding theory and finite projective spaces: Update 2001.
In: Blokhuis A., Hirschfeld J.W.P., Jungnickel  D., Thas  J.A. (eds.) Finite
    Geometries (Proc. 4th Isle of Thorns Conf., July 16-21, 2000),
    Develop. Math. \textbf{3}, pp. 201--246. Kluwer, Dordrecht, (2001).
\url{https://doi.org/10.1007/978-1-4613-0283-4_13}.

\bibitem{HufPless}
Huffman W.C., Pless V.S.: Fundamentals of Error-Correcting Codes.
Cambridge Univ. Press, (2003).

\bibitem{HandbookMathForm}
Jeffrey A., Dai H.H.:
Handbook of Mathematical Formulas and Integrals.
 4$^{th}$ edition.  Elsevier, Academic Press, Amsterdam, (2008).

\bibitem{LandSt}
Landjev I., Storme L.:
Galois geometry and coding theory.
In: De Beule J., Storme L. (eds.) Current Research Topics in Galois geometry, Chapter 8, pp. 187--214. NOVA Academic, New York (2011).

\bibitem{LobstBibl}
Lobstein A.: Covering radius, an online bibliography (2023).
Available at: \url{https://www.lri.fr/~lobstein/bib-a-jour.pdf}.

\bibitem{MWS}
MacWilliams F.J., Sloane N.J.A.:
The Theory of Error-Correcting Codes.
3$^{rd}$ edition. North-Holland, Amsterdam, The Netherlands, (1981).

\bibitem{AMDS-MenPelSal}
Meneghetti A., Pellegrini M., Sala M.:
A formula on the weight distribution of linear codes with applications to AMDS codes. Finite Fields Their Appl. \textbf{77},  article 101933,  (2022).
\url{https://doi.org/10.1016/j.ffa.2021.101933}.

\bibitem{Nagy}
Nagy Z.L.: Saturating sets in projective planes and hypergraph covers.
 Discrete Math. \textbf{341}(4), 1078--1083 (2018).
\url{https://doi.org/10.1016/j.disc.2018.01.011}.

\bibitem{Stirling}
Robbins H.: A remark on Stirling's formula. American Math Monthly
 \textbf{62}(1), 26--29 (1955).
\url{https://doi.org/10.2307/2308012}.

\bibitem{Struik}
Struik R.: Covering Codes.
Ph.D thesis. Eindhoven University of Technology, The Netherlands, (1994).
\url{https://doi.org/10.6100/IR425174}.


\end{thebibliography}
 \end{document}